\DeclareMathOperator{\Gal}{Gal}
\DeclareMathOperator{\Ind}{Ind}
\DeclareMathOperator{\Res}{Res}
\DeclareMathOperator{\Hom}{Hom}
\DeclareMathOperator{\Pgl}{PGL}
\DeclareMathOperator{\gl}{GL}
\DeclareMathOperator{\SL}{SL}
\DeclareMathOperator{\Psl}{PSL}
\DeclareMathOperator{\Cl}{Cl}
\newtheorem{cor}{Corollary}
\newtheorem{ex}{Example}
\newtheorem{lem}{Lemma}
\newtheorem{theorem}{Theorem}
\newcommand{\overbar}[1]{\mkern 1.5mu\overline{\mkern-1.5mu#1\mkern-1.5mu}\mkern 1.5mu}
\newcommand{\defeq}{\stackrel{\text{def}}{=}}
\begin{document}
\title{On Artin L-functions and Gassmann Equivalence for Global Function Fields}
\author{Pavel Solomatin \\
\texttt{p.solomatin@math.leidenuniv.nl}
}
\affil{Leiden University, Mathematical Department,\\
Niels Bohrweg 1, 2333 CA Leiden}

\date{ Leiden, 2016}
\maketitle

\begin{abstract}
In this paper we present an approach to study arithmetical properties of global function fields by working with Artin L-functions. In particular we recall and then extend a criteria of two function fields to be arithmetically equivalent in terms of Artin L-functions of representations associated to the common normal closure of those fields. We provide few examples of such non-isomorphic fields and also discuss an algorithm to construct many such examples by using torsion points on elliptic curves. Finally, we will show how to apply our results in order to distinguish two global fields by a finite list of Artin L-functions. 
  
\end{abstract}

\textbf{\\ \\ \\ \\ \\ \\ \\ Acknowledgements:} 
I would like to thank both my advisors, namely professor Bart de Smit and professor Karim Belabas, for their useful advices during the project.

\newpage

\section{Introduction}

Let $K$ and $L$ be two number fields. We will say that they \emph{split equivalently} if for any prime number $p \in \mathbb Z$ there exists a bijection $\phi_p$ from the set of primes in $\mathcal O_K$ lying above $p$ to the set of those primes in $\mathcal O_L$. We will say that they are \emph{arithmetically equivalently} if for each $p$ the bijection $\phi_p$ is degree preserving. Let $N$ denote the common Galois closure of $K$ and $L$ over $\mathbb Q$ and let $G= \Gal(N / \mathbb Q) $, $H = \Gal(N / K) $, $H' = \Gal(N / L) $. We will call a triple $(G,H, H')$ a \emph{Gassmann triple} if for any conjugacy class $[c]$ in $G$ we have $|[c] \cap H | = |[c] \cap H'|$ or equivalently if we have isomorphism of induced representation $ \Ind^{G}_{H} (1_{H}) = \Ind^{G}_{H'} (1_{H'})$, where $1_H$(and $1_{H'}$) means trivial representation of $H$(of $H'$ respectively). Then we have the following famous result, see $\cite{Perl}$ and $\cite{Perl2}$: 

\begin{theorem}[Perlis]
The following statements are equivalent: 
\begin{enumerate}
	\item $\zeta_K(s) = \zeta_{L}(s)$;
        \item  $K$ and $L$ are arithmetically equivalently;
        \item  $K$ and $L$ split equivalently; 
        \item 	$(G,H,H')$ form a Gassmann triple.  		
\end{enumerate}
\end{theorem}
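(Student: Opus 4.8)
The plan is to funnel all four statements into a single group-theoretic condition on the triple $(G,H,H')$, moving between arithmetic and group theory via the decomposition-group dictionary together with the Chebotarev density theorem. Concretely, for a prime $p$ unramified in $N$ fix a Frobenius element $\sigma_p \in G$ (well defined up to conjugacy); then the primes of $K = N^H$ above $p$ are in bijection with the orbits of $\langle \sigma_p \rangle$ on the coset space $G/H$, the residue degree attached to an orbit being its length. Hence the splitting type of $p$ in $K$ is the cycle type of $\sigma_p$ acting on $G/H$, and the number of primes above $p$ is the number of $\langle \sigma_p\rangle$-orbits. Since all but finitely many $p$ are unramified and, by Chebotarev, every conjugacy class of $G$ arises as $\sigma_p$ for infinitely many $p$, statement (2) becomes ``the cycle type of $g$ on $G/H$ equals that on $G/H'$ for every $g \in G$'', and (3) becomes ``the number of $\langle g\rangle$-orbits on $G/H$ equals that on $G/H'$ for every $g \in G$''.

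For (1) $\Leftrightarrow$ (2) I would work with the Euler product $\zeta_K(s) = \prod_p \prod_{\mathfrak p \mid p} (1 - p^{-f(\mathfrak p \mid p)s})^{-1}$. Grouping by $p$, the factor at $p$ is a function of $p^{-s}$ determined by the multiset $\{f(\mathfrak p \mid p)\}$, and conversely, by uniqueness of the Euler-product expansion, this local factor is recovered from $\zeta_K$; so it suffices to observe that a finite multiset $\{f_i\}$ of positive integers is recoverable from the polynomial $\prod_i (1 - X^{f_i})$. This follows by factoring each $1 - X^{f_i}$ into cyclotomic polynomials: the multiplicity of $\Phi_1 = X - 1$ is the number of primes, and repeatedly removing the divisor-set of the largest surviving exponent peels off the $f_i$ one at a time. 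Thus $\zeta_K = \zeta_L$ holds iff the splitting types agree at every $p$, i.e. iff (2). The implication (2) $\Rightarrow$ (3) is immediate.

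For (2) $\Leftrightarrow$ (4), recall that $\Ind^G_H 1_H$ is the permutation character $\pi_H$ of $G/H$, and $\pi_H(g) = \#\{\, xH : x^{-1}gx \in H \,\} = |[g] \cap H| \cdot |C_G(g)| / |H|$. Summing $|[c] \cap H|$ over conjugacy classes shows that the Gassmann condition forces $|H| = |H'|$, after which $\pi_H = \pi_{H'}$ is equivalent to $|[g] \cap H| = |[g] \cap H'|$ for all $g$, that is to (4). On the other hand $\pi_H(g)$ is just the number of fixed points of $g$ on $G/H$; since $g^k$ again ranges over $G$ and the cycle type of $g$ on $G/H$ is determined by the numbers $\#\mathrm{Fix}_{G/H}(g^k)$ via M\"obius inversion (and conversely), equality of all these fixed-point counts is equivalent to equality of all cycle types, hence to (2).

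The heart of the matter is (3) $\Rightarrow$ (2). By the reduction above, (3) gives, for every $g \in G$ and hence for every power $g^e$, that the number of $\langle g^e\rangle$-orbits on $G/H$ equals that on $G/H'$; since a $\langle g \rangle$-orbit of length $d$ splits into $\gcd(e,d)$ orbits under $\langle g^e \rangle$, this reads $\sum_d \gcd(e,d)\,(a_d(g) - a'_d(g)) = 0$ for every $e$, where $a_d(g)$ (resp.\ $a'_d(g)$) counts the $d$-cycles of $g$ on $G/H$ (resp.\ $G/H'$). As the matrix $[\gcd(e,d)]_{e,d \mid n}$, with $n = \mathrm{ord}(g)$, is invertible (its determinant is $\prod_{d \mid n} \varphi(d) \neq 0$), we get $a_d(g) = a'_d(g)$ for all $d$; thus the cycle types agree, which is (2). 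One can also phrase this representation-theoretically: the number of $\langle g \rangle$-orbits on $G/H$ is $\langle \pi_H, \Ind^G_{\langle g \rangle} 1 \rangle$, and by Artin's induction theorem the classes $\Ind^G_C 1_C$ over cyclic $C \le G$ span the rational virtual character ring, so $\langle \pi_H - \pi_{H'}, \Ind^G_C 1_C \rangle = 0$ for all cyclic $C$ forces $\pi_H = \pi_{H'}$. I expect this passage — recovering the full splitting type from the mere count of primes — to be the main obstacle; the remaining implications are either the Galois dictionary plus Chebotarev or elementary combinatorial identities.
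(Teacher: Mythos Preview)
The paper does not give its own proof of this theorem; it is stated as a classical result of Perlis with a reference to \cite{Perl} and \cite{Perl2}. Your outline is essentially the classical argument: the translation of (2) and (3) into statements about the cycle structure of elements of $G$ on $G/H$ versus $G/H'$ via the decomposition-group dictionary and Chebotarev, the recovery of local splitting types from Euler factors for (1)$\Leftrightarrow$(2), the identification of (4) with $\pi_H=\pi_{H'}$, and the key step (3)$\Rightarrow$(4) via invertibility of the divisor-indexed matrix $[\gcd(e,d)]_{e,d\mid n}$ (equivalently, via Artin induction) are all correct and are how Perlis proceeds.

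There is one small gap to flag. Your reformulation of (2) as ``equal cycle types for every $g\in G$'' uses the Frobenius description of splitting, which is available only at primes unramified in $N$. This is harmless in the forward directions (2)$\Rightarrow$(4) and (3)$\Rightarrow$(4), since you need the hypothesis only at unramified primes and Chebotarev provides enough of those. But in the reverse direction your chain (4)$\Rightarrow$``equal cycle types''$\Rightarrow$(2) yields arithmetic equivalence only at the unramified primes, whereas the stated (2) demands agreement at every $p$. The standard one-line fix is to insert (4)$\Rightarrow$(1) directly: the Gassmann condition is exactly $\Ind^G_H 1_H\simeq\Ind^G_{H'}1_{H'}$, and the inductive property of Artin $L$-functions gives $\zeta_K(s)=L_{\mathbb Q}(\Ind^G_H 1_H,s)=L_{\mathbb Q}(\Ind^G_{H'}1_{H'},s)=\zeta_L(s)$; your Euler-product argument (1)$\Rightarrow$(2) then handles all primes, ramified or not, and the cycle closes.
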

   
We will call a Gassmann triple non-trivial if $H$ and $H'$ are not conjugate inside $G$. It happens if and only if $K$ is not isomorphic to $L$, as abstract fields or equivalently as extensions of $\mathbb Q$. This theorem gives rise to a variety of interesting results, for example by using group theory Perlis showed that if the degree of $K$ does not exceed 6, then equality $\zeta_K(s) = \zeta_L(s)$ implies $K \simeq L$. On the other hand there are infinitely many non-isomorphic pairs $(K_{\alpha} , L_{\alpha})$ of (isomorphism classes of) fields of degree seven such that $\zeta_{K_{\alpha}}(s) = \zeta_{L_{\alpha}}(s) $. Arithmetically equivalent fields share many common invariants, for example: normal closure, number of real and complex places, product of regulator and class number. But also, it is well known that class numbers itself may be different, see $\cite{Bart1}$. A good reference for the topic is \cite{Arith}. 
 
Now one can ask a similar question about global function fields. Let $q=p^m$, $p$ is prime and $k = \mathbb F_q$. Let us consider two curves $X$ and $Y$ over $k$. By a curve we mean smooth, projective, geometrically connected variety of dimension one over $k$. If we fix a $k$-rational separable map of $X$ and $Y$ to $\mathbb P^{1}$, then we obtain two finite separable geometric extensions of $\mathbb F_q(x)$, we will denote them by $K$ and $K'$ respectively. By analogy with the number field case, we have notions of arithmetical equivalence, splitting equivalence, Gassmann equivalence and Dedekind zeta-function. Then one checks that in this case notions of Gassmann equivalence, splitting equivalence and arithmetical equivalence coincide. 

But in contrast to the number field case, the above theorem is false in its full generality for the function field case. Namely, the implication from 1 to 2 is problematic. The problem is that the Dedekind zeta-function does not determinate spitting type, since in general there exist places in $K$ with the same norm above different places of $\mathbb F_q(x)$. One suitable approach here is to fix definition of the zeta-function associated to $K$. It turns out, that if one replace usual zeta-function by the so-called \emph{lifted Goss zeta-function}, then an analogue of the Perlis theorem becomes true. We refer an interested reader to \cite{Gunt1}.

The main purpose of this paper is to recall and then extend another approach to study arithmetically equivalent global function fields. Let  $K / F$ be a Galois extension of global fields with the Galois group $G = \Gal(K / F)$. Then for any finite dimensional complex representation $\rho$ of $G$ one attaches the Artin L-function $L_{F}(\rho, s)$. This is meromorphic function of complex variable $s$. For the sake of brevity we will denote them by $L_{F}(\rho)$. In 1986 K.Nagata published the paper \cite{Nagata} from which a careful reader could extract the following result:

\begin{theorem}\label{Nagata}
Let $K$, $K'$ denote two finite separable geometric extensions of $\mathbb F_q(x)$. Len $N$ denote the common Galois closure and $G= \Gal(N / \mathbb F_q(x)) $, $H = \Gal(N / K) $, $H' = \Gal(N / K') $. Let $\rho_1$, \dots $\rho_n$ denote all irreducible complex representations of $G$. Let $\psi = \Ind^{G}_{H}(1_H)$ and $\psi' = \Ind^{G}_{H'}(1_{H'})$. The following are equivalent: 
\begin{enumerate}
	\item For all  $i$ such that $1 \le i \le n$, we have $L_{K} ( \rho_i|_{H} ) = L_{K'} ( \rho_i|_{H'} )$; 
	\item $L_K(\psi|_{H})=L_{K'}(\psi|_{H'})$ and $L_K(\psi'|_{H})=L_{K'}(\psi'|_{H'})$;
        \item  $K$ and $L$ are arithmetically equivalently;	
        \item  $K$ and $L$ split equivalently; 
        \item 	$(G,H,H')$ form a Gassmann triple.  	
\end{enumerate}
\end{theorem}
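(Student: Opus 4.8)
The plan is to prove the cyclic chain of implications $1 \Rightarrow 2 \Rightarrow 5 \Rightarrow 3 \Rightarrow 4 \Rightarrow 1$, since most of these equivalences are essentially formal once the right reformulation is in place. The heart of the argument is the standard dictionary between induced representations and Artin L-functions: for any subgroup $H \le G$ and any representation $\sigma$ of $H$, Artin's formalism gives $L_K(\sigma) = L_F(\Ind^G_H \sigma)$ where $F = \mathbb F_q(x)$, and more generally for a representation $\rho$ of $G$ one has $L_K(\rho|_H) = L_F(\Ind^G_H \rho|_H) = L_F(\rho \otimes \Ind^G_H 1_H)$ by the projection formula. Thus the statement $L_K(\rho_i|_H) = L_{K'}(\rho_i|_{H'})$ is equivalent to $L_F(\rho_i \otimes \psi) = L_F(\rho_i \otimes \psi')$ for every irreducible $\rho_i$. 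Since the $\rho_i$ form a basis of the representation ring and Artin L-functions are multiplicative and "linearly independent" in a suitable sense (distinct virtual characters give L-functions whose logarithmic derivatives have distinct Dirichlet-series expansions over the places of $F$), this forces $\psi = \psi'$ as characters of $G$, which is exactly the Gassmann condition, i.e. item 5.

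First I would establish $1 \Rightarrow 5$ along the lines just sketched: expand each $L_F(\rho_i \otimes \psi)$ as an Euler product over places of $F$, take logarithmic derivatives to get a Dirichlet series in $q^{-s}$ whose coefficients record, place by place and degree by degree, the trace of Frobenius on the relevant representation; equality for all $i$ then yields $\langle \rho_i, \psi \rangle = \langle \rho_i, \psi' \rangle$ for all irreducibles, hence $\psi = \psi'$. The implication $5 \Rightarrow 3$ and $3 \Leftrightarrow 4$ are the function-field analogues of the corresponding steps in Perlis's theorem, and as the excerpt already notes, for function fields Gassmann, splitting, and arithmetic equivalence all coincide; I would prove $5 \Rightarrow 4$ directly by the classical Gassmann argument, interpreting $|[c] \cap H|$ as counting, via the Artin map / decomposition groups, the places of $K$ of each residue degree above a given place of $F$, so that the Gassmann identity literally gives the splitting-type bijection $\phi_v$. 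Then $4 \Rightarrow 3$ is immediate and $3 \Rightarrow 1$ follows because arithmetic (equivalently Gassmann) equivalence gives $\psi = \psi'$, whence $\rho_i \otimes \psi \cong \rho_i \otimes \psi'$ and the L-functions agree term by term. The implication $1 \Rightarrow 2$ is trivial (item 2 is a special case of item 1 with $\rho$ replaced by the permutation representations $\psi, \psi'$, after noting $\psi|_H, \psi'|_H$ etc. decompose into the $\rho_i|_H$), and $2 \Rightarrow 5$ runs exactly like $1 \Rightarrow 5$ but needs only the two characters $\psi, \psi'$: from $L_K(\psi|_H) = L_{K'}(\psi|_{H'})$ and the twin identity one extracts $\langle \psi, \psi \rangle_G$-type inner products sufficient to conclude $\psi \cong \psi'$.

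The main obstacle I anticipate is the "linear independence of Artin L-functions" step used to pass from an equality of L-functions to an equality of virtual characters. In the number-field case this is handled by Artin's theorem together with the nonvanishing/meromorphic continuation machinery, but over function fields one must be careful: L-functions here are rational functions in $q^{-s}$ (by Weil), and a priori distinct virtual characters could conceivably produce the same rational function if there were coincidences among Frobenius eigenvalues across all places. The clean way around this is to work not with the completed L-function but with its Euler product / logarithmic derivative and compare coefficients of $q^{-ms}$ for each $m$: the coefficient at $m$ is $\sum_{v} \sum_{j \mid m,\ d(v) \cdot j\ \text{appropriate}} d(v)\,\chi(\mathrm{Frob}_v^{\,m/d(v)})$, and by letting the places $v$ vary (and using Chebotarev density for the extension $N/F$, which holds for function fields) one sees that equality of these coefficients for all $m$ forces $\chi = \chi'$ on every Frobenius conjugacy class, hence on all of $G$ since Frobenius classes are dense. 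So the real work is packaging Chebotarev plus the Euler-product comparison into the statement "$L_F(\chi) = L_F(\chi') \Rightarrow \chi = \chi'$ for virtual characters $\chi,\chi'$ of $\Gal(N/F)$," after which all five equivalences fall out formally.
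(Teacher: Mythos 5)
The central step of your plan --- extracting equality of virtual characters from equality of Artin L-functions by comparing Euler products / logarithmic derivatives --- does not work for function fields, and this is exactly the subtlety the paper flags in its introduction. For a virtual character $\chi$ of $\Gal(N/F)$ with $F=\mathbb F_q(x)$, the coefficient of $T^m$ in $-T\frac{d}{dT}\log L_F(T,\chi)$ is a \emph{single sum} over all places $v$ of $F$ of degree dividing $m$; you cannot separate the individual Frobenius contributions, and Chebotarev only tells you that each conjugacy class occurs as some Frobenius, not how to isolate its term inside that aggregated sum. The paper's first example (two elliptic curves over $\mathbb F_7$ with the same zeta-function but non-conjugate $H,H'$ in $G=C_2\times C_2$) gives a concrete counterexample: there $\Ind^G_H 1_H \neq \Ind^G_{H'} 1_{H'}$, yet $L_F(\Ind^G_H 1_H) = \zeta_K = \zeta_{K'} = L_F(\Ind^G_{H'} 1_{H'})$. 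So the lemma you propose to "package," namely that $L_F(\chi)=L_F(\chi')$ forces $\chi=\chi'$ for virtual characters, is simply false over $\mathbb F_q(x)$, and passing to the logarithmic derivative contains no more information than the rational function itself.

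What actually makes $1\Rightarrow 5$ go through is that each L-function is used for a single numerical datum: its pole order at $s=1$. For a character $\chi$ of $G$, $L_F(\chi)$ has a pole at $s=1$ of order $(\chi,1_G)_G$; by Frobenius reciprocity the equality $L_K(\rho_i|_H)=L_{K'}(\rho_i|_{H'})$ then gives $(\rho_i,\psi)_G=(\rho_i,\psi')_G$, and it is only because $\rho_i$ is allowed to range over \emph{all} irreducibles that one can conclude $\psi=\psi'$. The freedom to twist by arbitrary $\rho_i$ is thus essential --- it is the mechanism that compensates for the failure of the naive linear-independence statement. The same pole-order device, combined with the polarization identity $(\psi-\psi',\psi-\psi')_G=(\psi,\psi)_G-2(\psi,\psi')_G+(\psi',\psi')_G$, is what handles $2\Rightarrow 5$. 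The rest of your chain (the equivalences $3\Leftrightarrow4\Leftrightarrow5$ and the formal $3\Rightarrow1$ via Artin formalism) matches the paper in substance, although the paper cites the Gassmann--splitting--arithmetic equivalence as known rather than redoing the classical count.
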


In this paper we improve his argument and prove the above theorem as a particular case of the following more general result\footnote{ In order to get Nagata's result plug in the settings trivial representations $\alpha = 1_H$ and $\beta = 1_{H'}$}:

\begin{theorem}\label{main}\label{Lfunct}
In the above settings let $\alpha$ denotes a complex representation of $H$ and $\alpha'$ denotes a complex representation of $H'$. Let $\psi =\Ind^{G}_{H} (\alpha)$ and $\psi' =\Ind^{G}_{H'} (\alpha')$. Let $\bar{\rho}$ denotes the dual representation of $\rho$.The following are equivalent: 
\begin{enumerate}
	\item For all $i$ such that $1 \le i \le n$ we have equality of Artin L-functions: $L_{K} ( \alpha \otimes \rho_i|_{H} ) = L_{K'} ( \alpha' \otimes \rho_i|_{H'} )$ 
	\item $L_K({\bar{\alpha}}\otimes(\psi|_{H}) )=L_{K'}({\bar{\alpha'}}\otimes(\psi|_{H'}))$, and \newline
	$L_K({\bar{\alpha}}\otimes(\psi'|_{H} ))=L_{K'}({\bar{\alpha'}}\otimes(\psi'|_{H'}))$;
	\item Induced representations $\psi$ and $\psi'$  are isomorphic.
\end{enumerate}
\end{theorem}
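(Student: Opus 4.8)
The plan is to use functoriality of Artin $L$-functions to transport every $L$-function occurring in (1)--(3) down to the common base $F=\mathbb F_q(x)=N^G$, so that all three conditions become statements about the pair of $G$-representations $(\psi,\psi')$, and then to settle these with the Chebotarev density theorem for $N/F$. Invariance of Artin $L$-functions under induction gives $L_K(\beta)=L_F(\Ind_H^G\beta)$ for any representation $\beta$ of $H=\Gal(N/K)$, and likewise over $K'$; combined with the projection formula $\Ind_H^G(\beta\otimes\Res_H\rho)\cong(\Ind_H^G\beta)\otimes\rho$ and with $\Ind_H^G(\bar\beta)\cong\overline{\Ind_H^G\beta}$, this yields
\begin{align}
L_K(\alpha\otimes\rho_i|_H)&=L_F(\psi\otimes\rho_i), & L_{K'}(\alpha'\otimes\rho_i|_{H'})&=L_F(\psi'\otimes\rho_i),\\
L_K(\bar\alpha\otimes\psi|_H)&=L_F(\bar\psi\otimes\psi), & L_{K'}(\bar{\alpha'}\otimes\psi|_{H'})&=L_F(\bar{\psi'}\otimes\psi),
\end{align}
and the same in the second row with $\psi$ replaced by $\psi'$. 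Hence (1) reads ``$L_F(\psi\otimes\rho_i)=L_F(\psi'\otimes\rho_i)$ for all $i$'', (2) reads ``$L_F(\bar\psi\otimes\psi)=L_F(\bar{\psi'}\otimes\psi)$ and $L_F(\bar\psi\otimes\psi')=L_F(\bar{\psi'}\otimes\psi')$'', and (3) reads ``$\psi\cong\psi'$''. The implications $(3)\Rightarrow(1)$ and $(3)\Rightarrow(2)$ are then immediate, since $L_F(-)$ depends only on the isomorphism class of its argument; it remains to prove $(1)\Rightarrow(3)$ and $(2)\Rightarrow(3)$.

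For these I need one analytic fact: how much a single $L_F(\theta)$ remembers of $\theta$. Let $b_n(\theta)$ be the $n$-th coefficient of the logarithmic derivative of $L_F(\theta,s)$, expanded as a power series in $q^{-s}$, so that $L_F(\theta)=L_F(\theta')$ forces $b_n(\theta)=b_n(\theta')$ for all $n$. The dominant part of $b_n(\theta)$ is $n\sum_{\deg v=n}\mathrm{tr}(\theta(\mathrm{Frob}_v)\mid V_\theta^{I_v})$, the remaining terms --- from the finitely many ramified places and from the divisors $d<n$ of $n$ --- being $O(q^{n/2})$. Since $N$ need not be geometric over $\mathbb F_q$, set $d_0:=[\,\overline{\mathbb F}_q\cap N:\mathbb F_q\,]$, let $\phi_0,\dots,\phi_{d_0-1}$ be the fibres of the natural surjection $G\twoheadrightarrow\mathbb Z/d_0$, and put $G_{\mathrm{geom}}:=\phi_0=\ker(G\twoheadrightarrow\mathbb Z/d_0)$; then the Frobenius of a place of degree $n$ lies in $\phi_r$ with $r\equiv n\ \mathrm{mod}\ d_0$, and the Chebotarev density theorem for $N/F$ gives
\[
q^{-n}\,b_n(\theta)\ \longrightarrow\ \frac{1}{|G_{\mathrm{geom}}|}\sum_{g\in\phi_r}\chi_\theta(g)\qquad\text{as }n\to\infty,\ n\equiv r\ \mathrm{mod}\ d_0.
\]
Thus $L_F(\theta)=L_F(\theta')$ implies $\sum_{g\in\phi_r}\chi_\theta(g)=\sum_{g\in\phi_r}\chi_{\theta'}(g)$ for every $r$ --- a single $L$-function yields only these $d_0$ ``coset sums'' of the character. (This is precisely why, as the introduction recalls, the single equality $\zeta_K=\zeta_{K'}$ need not force Gassmann equivalence: twisting by the $\rho_i$ is essential.)

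The two converse implications then fall out. For $(1)\Rightarrow(3)$ I would apply the displayed limit to $\theta=\psi\otimes\rho_i$ and $\theta'=\psi'\otimes\rho_i$ and sum the resulting coset identities over all $r$; since $\chi_{\psi\otimes\rho_i}=\chi_\psi\chi_{\rho_i}$ and the $\phi_r$ partition $G$, this gives $\sum_{g\in G}\bigl(\chi_\psi(g)-\chi_{\psi'}(g)\bigr)\chi_{\rho_i}(g)=0$ for every $i$, and since the characters $\chi_{\rho_i}$ span the class functions on $G$ and that family is closed under complex conjugation, the class function $\chi_\psi-\chi_{\psi'}$ is orthogonal to all of them, hence zero; so $\psi\cong\psi'$. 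For $(2)\Rightarrow(3)$, summing the coset identities over $r$ for the pair $(\bar\psi\otimes\psi,\ \bar{\psi'}\otimes\psi)$ gives $\langle\psi,\psi\rangle_G=\langle\psi,\psi'\rangle_G$, and for $(\bar\psi\otimes\psi',\ \bar{\psi'}\otimes\psi')$ it gives $\langle\psi',\psi'\rangle_G=\langle\psi,\psi'\rangle_G$; hence $\langle\psi-\psi',\psi-\psi'\rangle_G=0$, i.e.\ $\psi\cong\psi'$. (One could equally extract $\langle\psi,\psi\rangle_G$ and $\langle\psi,\psi'\rangle_G$ from the orders of the poles at $s=1$ of the two $L$-functions, using that the nontrivial irreducible constituents contribute factors holomorphic and nonvanishing there by the Riemann hypothesis for curves.)

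The main obstacle is exactly this analytic input: one must organize the Chebotarev estimate around the constant-field extension of $N$ --- arguing coset-by-coset in $G$ rather than with full conjugacy classes --- and verify that the ramified Euler factors and the lower-degree terms of the logarithmic derivative are genuinely of strictly smaller order, so that the limit isolates precisely $\sum_{g\in\phi_r}\chi_\theta(g)$. The surrounding representation theory is routine; note that the hypothesis that $\psi$ and $\psi'$ are induced is used only in the first step, to tie the intrinsic condition $\psi\cong\psi'$ back to the fields $K$ and $K'$.
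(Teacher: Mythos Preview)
Your proof is correct, and the representation-theoretic skeleton is the same as the paper's: transport every $L$-function down to $F$ using invariance under induction together with the projection formula (the paper does the equivalent computations via Frobenius reciprocity, arriving at the same identities $\Ind_H^G(\alpha\otimes\rho_i|_H)\cong\psi\otimes\rho_i$ and $\Ind_H^G(\bar\alpha\otimes\psi|_H)\cong\bar\psi\otimes\psi$), so that $(3)\Rightarrow(1)$ and $(3)\Rightarrow(2)$ are immediate.

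The genuine difference is in the analytic input for the converse implications. The paper uses only the order of the pole at $s=1$: for any $\theta$, $L_F(\theta)$ has a pole there of order $\langle\theta,1_G\rangle_G$. Applied to $\theta=\psi\otimes\rho_i$ this gives $\langle\psi,\bar\rho_i\rangle_G$ directly for each $i$, yielding $(1)\Rightarrow(3)$; applied to $\theta=\bar\psi\otimes\psi$ and $\theta=\bar{\psi'}\otimes\psi$ it gives $\langle\psi,\psi\rangle_G$ and $\langle\psi',\psi\rangle_G$, yielding $(2)\Rightarrow(3)$ exactly as you do. You instead run Chebotarev on the logarithmic derivative to recover the coset-averaged characters $\sum_{g\in\phi_r}\chi_\theta(g)$, then sum over $r$ to obtain the same inner products. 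This is more work---you must keep track of the constant-field extension of $N$ and control the lower-order terms---whereas the pole-order argument is a one-liner that sidesteps the $d_0$, $\phi_r$ bookkeeping entirely. On the other hand your route is more robust (it would survive in settings where one only knew equality of Dirichlet coefficients rather than of meromorphic functions) and makes transparent exactly which analytic information is doing the work; your parenthetical remark shows you already see the shortcut the paper takes.
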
  

This theorem is not just a formal generalizations of Nagata's results but also allows us to use group theory to construct for any given pair of non-isomorphic global function fields a \emph{finite list of L-functions which distinguishes them}. This goal is achieved in two steps. First we need the following group-theoretical result. 
 
 Let $G$ be a finite group, $H$ a subgroup of index $n$, and $C_l=\mu_l$ be a cyclic group of order $l$, where $l$ is an odd prime. Let us consider semi-direct products $\tilde{G}=C_l^{n}\rtimes G$ and $\tilde{H}=C_l^{n}\rtimes H$, where $G$ acts on the components of $C_l^{n}$ by permuting them as cosets $G/H$. We will construct an abelian character $\chi$ of $\tilde{H}$ to the group $\mu_l$, such that the following is true: 
 
 \begin{theorem}[Bart de Smit]\label{Group}
For any  subgroup $\tilde{H}' \subset \tilde{G}$ and any character $\chi'$ : $\Tilde{H}' \to \mathbb C^{*}$ if $\Ind_{\tilde{H}'}^{\tilde{G}} (\chi') =  \Ind_{\tilde{H} }^{\tilde{G}} (\chi ) $ then $\tilde{H'}$ and $\tilde H$ are conjugate in $\tilde{G}$.
  \end{theorem}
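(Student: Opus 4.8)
The plan is to choose $\chi$ to be a ``coordinate character'' of $\tilde H$, to reduce the whole statement to the single claim that the abelian normal subgroup $C_l^{\,n}$ must lie inside $\tilde H'$, and then to read off the conjugacy of $\tilde H'$ and $\tilde H$ from a soft eigenline argument.

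Write $N=C_l^{\,n}$ and fix a $G$-equivariant identification $N=\bigoplus_{gH\in G/H}\mu_l$, where $G$ permutes the summands through its action on the cosets $G/H$; in particular $H=\mathrm{Stab}_G(eH)$ fixes the summand indexed by the trivial coset. I take $\chi\colon\tilde H=N\rtimes H\to\mu_l$ to be trivial on $H$ and equal on $N$ to the projection $\pi_0$ onto the $eH$-summand. Since $\pi_0$ is $H$-invariant, this is a well-defined homomorphism. Choosing a primitive $l$-th root of unity identifies $N$ with the $\mathbb F_l$-vector space $\mathbb F_l^{\,n}$ (here $l$ prime is used) and $\widehat N=\Hom(N,\mathbb C^{*})$ with the dual $\mathbb F_l^{\,n}$; under this identification $\chi|_N$ is one standard basis vector and its full $\tilde G$-orbit in $\widehat N$ is the standard basis $\{e_1^{*},\dots,e_n^{*}\}$, each vector occurring once, because $N$ acts trivially on $\widehat N$ while $G$ permutes coordinates transitively through $G/H$.

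Now suppose $\tilde H'\le\tilde G$ and $\chi'\colon\tilde H'\to\mathbb C^{*}$ satisfy $V:=\Ind^{\tilde G}_{\tilde H'}(\chi')\cong\Ind^{\tilde G}_{\tilde H}(\chi)$. Comparing dimensions, and using that $\chi,\chi'$ are one-dimensional, gives $[\tilde G:\tilde H']=n=[\tilde G:\tilde H]$. Restrict $V$ to $N$. On the $\tilde H$-side, since $N\trianglelefteq\tilde G$ and $N\subseteq\tilde H$, the restriction $\Res_N\Ind^{\tilde G}_{\tilde H}(\chi)$ is the multiplicity-free sum $\pi_1\oplus\cdots\oplus\pi_n$ of the $n$ coordinate characters, namely the $\tilde G$-orbit of $\chi|_N$. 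On the $\tilde H'$-side, set $N'=N\cap\tilde H'$; Mackey's restriction formula, simplified using $N\trianglelefteq\tilde G$, gives $\Res_N V=\bigoplus_{g}\Ind^{N}_{\,{}^{g}N'}\big({}^{g}(\chi'|_{N'})\big)$, the sum over double-coset representatives $g$ of $N\backslash\tilde G/\tilde H'$. For abelian $N$, each summand $\Ind^{N}_{\,{}^{g}N'}(\cdot)$ is, viewed inside $\widehat N\cong\mathbb F_l^{\,n}$, a coset of the annihilator of ${}^{g}N'$, that is, an affine subspace of dimension $d:=n-\dim_{\mathbb F_l}N'$. Matching the two sides: the standard basis $\{e_1^{*},\dots,e_n^{*}\}$ of $\mathbb F_l^{\,n}$ is a disjoint union of affine subspaces, all of dimension $d$.

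The key step, and the one genuinely using that $l$ is odd, is to conclude $d=0$, that is, $N\subseteq\tilde H'$. Indeed, if $d\ge1$, one of these affine subspaces contains an affine line, which consists of $l\ge3$ points, all of them standard basis vectors; but any three distinct standard basis vectors are affinely independent, so a line can contain at most two of them, a contradiction. (For $l=2$ this breaks down, since a line then has only two points and e.g.\ $\{e_1^{*},e_2^{*}\}$ is a line.) So $N\subseteq\tilde H'$. Consequently $\Res_N V$ is multiplicity free and the $N$-eigenlines $L_1,\dots,L_n$ of $V$ are intrinsically defined, and $\tilde G$ permutes them transitively. In the model $\Ind^{\tilde G}_{\tilde H}(\chi)$, where $N\subseteq\tilde H$, the distinguished line $e\otimes\mathbb C$ is one of the $L_i$ and its stabilizer in $\tilde G$ is $\tilde H$; in the model $\Ind^{\tilde G}_{\tilde H'}(\chi')$, where now $N\subseteq\tilde H'$, the distinguished line $e\otimes\mathbb C$ is again one of the $L_i$ and its stabilizer is $\tilde H'$. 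Since $\tilde G$ acts transitively on $\{L_1,\dots,L_n\}$, these two stabilizers are conjugate, i.e.\ $\tilde H'$ and $\tilde H$ are conjugate in $\tilde G$. I expect the only real obstacle to be the passage to $N\subseteq\tilde H'$: it requires the Mackey computation of $\Res_N V$ from both presentations of $V$ together with the elementary but essential fact that a coordinate basis of $\mathbb F_l^{\,n}$ contains no affine line when $l\ge3$; everything after that is formal.
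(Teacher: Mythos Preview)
Your proof is correct and takes a genuinely different route from the paper's. The paper argues directly with matrices: it picks the element $\alpha=(\zeta,1,\dots,1,e)\in N$, computes that $\psi(\alpha)=\mathrm{diag}(\zeta,1,\dots,1)$, and then uses a trace inequality (this is where $l>2$ enters, via $\zeta\neq -1$) to force $\psi'(\alpha)$ to be diagonal in its own monomial basis; from $A\psi(\alpha)=\psi'(\alpha)A$ it deduces that the intertwiner $A$ is itself a monomial matrix, so the two monomial structures match and a cited lemma on monomial representations yields the conjugacy. Your argument is more structural: via Mackey you compute $\Res_N V$ from both presentations, reduce to the claim that the standard basis of $\mathbb F_l^{\,n}$ cannot be partitioned into positive-dimensional affine flats, and rule this out by the affine independence of any three basis vectors when $l\ge 3$; this gives $N\subseteq\tilde H'$ outright, after which your eigenline-stabilizer argument replaces the paper's monomial-representation lemma. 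Your route is cleaner and isolates precisely where oddness of $l$ is used, while the paper's route has the virtue of being entirely elementary---no Mackey formula, just explicit matrix manipulations.
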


\begin{bf}Remark:\end{bf} note that theorem [\ref{Group}] could be also applied to the number field case. More concretely for a given number field $K$, Bart de Smit constructs an abelian character $\chi$ of the Galois group of some Galois extension of $K$ such that Artin L-function $L_K(\chi)$ occurs only for that field. Finding an analogue of this result served as initial motivation for our article. The following theorem is one of our achievements in the direction of generalization of his results.

Next, in the settings of theorem [\ref{Nagata}] we construct a Galois extension $M$ of $\mathbb F_q(t)$ containing $K$ and $K'$ such that the Galois group $\Gal(M : \mathbb F_q(t))$ is $\tilde{G}$ and $K=M^{\tilde{H}}$, $K'=M^{\tilde{H'}}$ for $\tilde{G}$, $\tilde{H}$,  $\tilde{H'}$ as in theorem [\ref{Group}]. Altogether, this gives us:
 
\begin{theorem}\label{Mn}
For a given pair $K$ and $K'$ of finite separable geometric extensions of $F=\mathbb \mathbb F_q(t)$ there exists  a Galois extension $M$ of $\mathbb F_q(t)$ with Galois group $\tilde{G}$, such that $K = M^{\tilde{H}}$ and $K' = M^{\tilde{H'}}$ for some subgroups $\tilde{H}$, $\tilde{H'}$ of $\tilde{G}$ with the following properties. There exists an abelian character $\alpha$ of $\tilde{H}$ such that for any abelian character $\alpha'$ of $\tilde{H'}$ the following are equivalent :
\begin{enumerate}	
	\item For any irreducible representation $\rho$ of $\tilde{G}$ we have equality of Artin L-functions: $L_{K} ( \alpha \otimes \rho|_{\tilde{H}} ) = L_{K'} ( \alpha' \otimes \rho|_{\tilde{H'}} )$;
	\item $L_K({\bar{\alpha}}\otimes(\psi|_{\tilde{H}}) )=L_{K'}({\bar{\alpha'}}\otimes(\psi|_{\tilde{H'}}))$, and \newline
	$L_K({\bar{\alpha}}\otimes(\psi'|_{\tilde{H}} ))=L_{K'}({\bar{\alpha'}}\otimes(\psi'|_{\tilde{H'}}))$, \newline where $\psi =\Ind^{\tilde{G}}_{\tilde{H}} (\alpha)$ and $\psi' =\Ind^{\tilde{G}}_{\tilde{H'}} (\alpha')$;
	\item Induced representations $\psi$ and $\psi'$  are isomorphic.
\end{enumerate}
Moreover, if those conditions hold then $K$ and $K'$ isomorphic as extensions of $\mathbb F_q(t)$.
\end{theorem}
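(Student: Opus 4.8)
\bigskip
\noindent The plan is to obtain Theorem~\ref{Mn} by feeding a suitable field into the two results already established. Theorem~\ref{main} shows that conditions~(1),~(2),~(3) are equivalent for \emph{any} Galois extension $M$ of $F=\mathbb{F}_q(t)$ containing $K$ and $K'$: its proof uses nothing about the closure $N$ beyond the fact that it is Galois over $F$ and contains the two fields (the L-function identities $L_K(\alpha\otimes\rho|_H)=L_F((\Ind^{G}_{H}\alpha)\otimes\rho)$ and the injectivity of the Artin L-function on the representation ring of $\Gal(M/F)$ are all that is needed), so we may run it with $\tilde G=\Gal(M/F)$, $\tilde H=\Gal(M/K)$, $\tilde H'=\Gal(M/K')$ and the given characters $\alpha,\alpha'$. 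Granting this, the ``moreover'' clause drops out of Theorem~\ref{Group}: if~(1)--(3) hold then in particular~(3) reads $\Ind^{\tilde G}_{\tilde H}(\alpha)\cong\Ind^{\tilde G}_{\tilde H'}(\alpha')$, and when $\alpha=\chi$ is de Smit's character this is exactly the hypothesis of Theorem~\ref{Group}, so $\tilde H$ and $\tilde H'$ are conjugate in $\tilde G$; restricting a conjugating element of $\Gal(M/F)$ to $K=M^{\tilde H}$ then yields an $\mathbb{F}_q(t)$-isomorphism onto $K'=M^{\tilde H'}$. Thus the whole task reduces to constructing a field $M$ whose Galois group over $F$ is precisely $\tilde G=C_l^{\,n}\rtimes G$, with $K$ and $K'$ cut out by the subgroups $\tilde H=C_l^{\,n}\rtimes H$ and $\tilde H'=C_l^{\,n}\rtimes H'$ of Theorem~\ref{Group}.

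To build $M$, let $N$ be the common Galois closure of $K,K'$ over $F$, with $G=\Gal(N/F)$, $H=\Gal(N/K)$, $H'=\Gal(N/K')$, $n=[G:H]$, and fix an odd prime $l\neq p$ --- chosen compatibly with de Smit's construction in Theorem~\ref{Group} and so that $K$ has places of degree $d$ with $l\mid q^{d}-1$. Using the Chebotarev density theorem for $N/F$, I would pick a place $\mathfrak P$ of $N$ whose decomposition group in $G$ is trivial, set $\mathfrak p=\mathfrak P\cap K$, and produce a cyclic degree-$l$ extension $L_0/K$ ramified \emph{exactly} at $\mathfrak p$, tamely --- for instance as the degree-$l$ layer of a ray class field of $K$ of conductor $\mathfrak p$, or explicitly via an $l$-descent Kummer pairing on a suitable elliptic curve over $K$ (this is the construction that also feeds the algorithmic part of the paper). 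Put $M$ equal to the Galois closure of $L_0$ over $F$.

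Since $L_0/K$ is normal, its $G$-conjugates over $F$ are $n$ fields $L_0^{g}$ indexed by the cosets $gH\in G/H$, and $M=N\cdot\prod_{gH}L_0^{g}$. As $\mathfrak p$ is unramified in $N/K$ while $L_0/K$ ramifies there, one gets $L_0^{g}\cap N=K^{g}$ and $[L_0^{g}N:N]=l$, so $\Gal(M/N)$ embeds $G$-equivariantly into $\prod_{gH}\Gal(L_0^{g}N/N)\cong C_l^{\,n}$ (with $G$ permuting the factors as the cosets $G/H$) and surjects onto each factor. The role of the choice of $\mathfrak P$ is to make this embedding onto: the conjugate place $g_i(\mathfrak P)$ is ramified in $L_0^{g_j}N/N$ only when the cosets $g_iH$ and $g_jH$ coincide --- i.e.\ only for $j=i$, thanks to the triviality of $D_{\mathfrak P}(N/F)$ and the distinctness of the coset representatives --- so the inertia subgroup of $g_i(\mathfrak P)$ inside $\Gal(M/N)$ is a nontrivial subgroup of, hence all of, the $i$-th factor $C_l$. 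As $i$ ranges over $G/H$ these inertia groups generate $C_l^{\,n}$, so $\Gal(M/N)=C_l^{\,n}$ and $\Gal(M/F)=\tilde G$. Taking fixed fields and using $K,K'\subseteq N$ gives $\Gal(M/K)=\tilde H$, $\Gal(M/K')=\tilde H'$ with $K=M^{\tilde H}$, $K'=M^{\tilde H'}$ --- precisely the configuration of Theorem~\ref{Group}. Feeding this back into the deduction of the first paragraph finishes the proof.

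\emph{Where the difficulty lies.} The formal part --- invoking Theorems~\ref{main} and~\ref{Group} --- is essentially automatic; the real work is the genericity in the construction, namely forcing $\Gal(M/N)$ to be the \emph{full} permutation module $C_l^{\,n}$ rather than a proper $G$-stable subgroup such as a diagonal. The ramification bookkeeping around a place with trivial decomposition group is exactly what controls this, and it is the step that needs care. One must also check that $l$ can be taken prime to $p$ and compatibly with de Smit's construction, and that a degree-$l$ extension $L_0/K$ with the prescribed ramification exists --- allowing, if necessary, a second auxiliary tame place of trivial decomposition type, or an adjustment of $\deg\mathfrak p$.
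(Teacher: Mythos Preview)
Your strategy is essentially the paper's own: reduce to Theorems~\ref{main} and~\ref{Group}, then build $M$ by taking a prime of $F$ that splits completely in $N$, producing a cyclic $C_l$-extension $L_0/K$ ramified only at one prime above it (via a ray class field), and passing to the Galois closure; the linear disjointness of the conjugates $NL_0^{\sigma}$ over $N$ is controlled by the disjoint ramification loci, exactly as you indicate. So the architecture is correct and matches the paper.

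There is, however, one genuine gap. From $\Gal(M/N)=C_l^{\,n}$ with the correct $G$-action you conclude $\Gal(M/F)=\tilde G=C_l^{\,n}\rtimes G$, but all you actually have is a short exact sequence $1\to C_l^{\,n}\to\Gal(M/F)\to G\to 1$; you have not argued that it splits. The paper handles this by imposing from the outset that $l$ be coprime to $|G|$ and then invoking Schur--Zassenhaus. Your stated constraints on $l$ (odd, $l\neq p$, and $l\mid q^{d}-1$) do not force this, so as written the identification $\Gal(M/F)\cong\tilde G$ is unjustified. Relatedly, the paper also requires $l$ coprime to $q-1$ and to $h_K$ so that the degree-$l$ quotient of the ray class group genuinely produces an extension ramified at $\mathfrak b_1$ rather than an unramified (or constant-field) one; your hedge ``allowing, if necessary, a second auxiliary tame place'' is a workaround, but the cleaner fix --- and the one the paper uses --- is simply to choose $l$ prime to $p$, $q-1$, $|G|$, and $h_K$ before selecting $\mathfrak p$.
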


The paper has the following structure: in the next section we give a proof of theorem $[\ref{Lfunct}]$. After that we study arithmetical equivalence for global function fields: we provide few explicit examples of non-isomorphic, but arithmetically equivalent global function fields, discuss an algorithm to construct two-parametric family of such pairs with any finite base field and briefly review properties of such fields.  In the next section we give a proof of theorem [\ref{Group}] and in the last section we give a proof of theorem [\ref{Mn}]. 
\section{On the L-functions criteria}

In this section we are going to prove our main theorem, but before that, let us first consider one particular example. 

\begin{ex} 
Consider two elliptic curves $E$ and $E'$ over $\mathbb F_7$, affine part of which defined by equations $y^2=x^3+1$ and $y^2=x^3+3x+1$ respectively. Let us denote by $K$ and $K'$ the corresponding function fields. One checks that $$\zeta_K(T) = \frac{7T^2 + 4T + 1 }{(1-T)(1-7T) }  = \zeta_{K'}(T),$$ where $T=7^{-s}$. Hence by the theorem of Weil, $E$ and $E'$ are $\mathbb F_{7}$-isogenous, but $j(E) = 0$ and $j(E')=2$ so they are not isomorphic even over the algebraic closure $\overbar{\mathbb F_7}$ and hence $K \not \simeq K'$.  
\end{ex}
   
In the above example we have quadratic extensions $\mathbb F_7(\sqrt{f_i(x)}) / \mathbb F_7(x)$, where $f_1(x) =x^3+1 $ and $f_2(x)=x^3+3x+1$. Obviously those are abelian Galois extensions with Galois group $C_2$. It means that despite the fact that $K$ and $K'$ share the same $\zeta$-function they do not share splitting type(otherwise they must be isomorphic). But we could fix this. Namely, let us consider the common Galois closure $N$. We denote by $G, H, H'$ Galois groups of $\Gal(N / \mathbb F_q(x)), \Gal(N / K), \Gal(N / K' )$, respectevly. We have $G = C_2 \oplus C_2$ and hence there exists a one-dimensional character $\chi$ of $G$ such that $\chi|_H = 1_H$ and $\chi|_{H'}\ne 1_{H'}$. Now $L_K( \chi|_H ) = \zeta_K$ and therefore this function has a pole at $s=1$. But, $L_{K'}(\chi|_{H'})$ is an Artin L-function of a non-trivial abelian character, hence it has no poles, see \cite{Rosen}. Therefore we see that $L_K( \chi|_H ) \ne L_{K'}(\chi|_{H'})$. This idea gives rise to the our main theorem.

\begin{theorem}\label{main}
Let $K$, $K'$ denote two finite separable geometric extensions of $\mathbb F_q(x)$. Len $N$ denote the common Galois closure and $G= \Gal(N / \mathbb F_q(x)) $, $H = \Gal(N / K) $, $H' = \Gal(N / K') $. Let $\alpha$ denotes a complex representation of $H$ and $\alpha'$ denotes a complex representation of $H'$. Let $\rho_1$, \dots $\rho_n$ denote all irreducible complex representations of $G$ and  $\bar{\rho}$ denotes the dual representation of $\rho$. Let $\psi =\Ind^{G}_{H} (\alpha)$ and $\psi' =\Ind^{G}_{H'} (\alpha')$. The following are equivalent: 
\begin{enumerate}
	\item For all $i$ such that $1 \le i \le n$ we have equality of Artin L-functions: $L_{K} ( \alpha \otimes \rho_i|_{H} ) = L_{K'} ( \alpha' \otimes \rho_i|_{H'} )$ 
	\item $L_K({\bar{\alpha}}\otimes(\psi|_{H}) )=L_{K'}({\bar{\alpha'}}\otimes(\psi|_{H'}))$ and 
	$L_K({\bar{\alpha}}\otimes(\psi'|_{H} ))=L_{K'}({\bar{\alpha'}}\otimes(\psi'|_{H'}))$;
	\item Induced representations $\psi$ and $\psi'$ are isomorphic.
\end{enumerate}
\end{theorem}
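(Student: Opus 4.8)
The plan is to establish the three equivalences by exploiting the two fundamental functorial properties of Artin L-functions: their behaviour under induction (Artin's induction formula, $L_K(\beta) = L_F(\Ind^G_H \beta)$ for $\beta$ a representation of $H = \Gal(N/K)$ and $F = \mathbb{F}_q(x)$), and their multiplicativity under direct sums, $L_F(\sigma_1 \oplus \sigma_2) = L_F(\sigma_1) L_F(\sigma_2)$. Combined with the fact that an Artin L-function over a global function field determines its representation — more precisely, that the map $\sigma \mapsto L_F(\sigma)$ from (virtual) representations of $G$ to nonzero meromorphic functions is injective, which follows from linear independence of the local Euler factors / the Čebotarev-type density argument over function fields — this will let me translate each L-function identity into an identity of (virtual) characters.

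First I would record the key translation: by the projection formula (Frobenius reciprocity at the level of representations), $\Ind^G_H(\alpha \otimes \rho_i|_H) \cong (\Ind^G_H \alpha) \otimes \rho_i = \psi \otimes \rho_i$, and likewise $\Ind^G_{H'}(\alpha' \otimes \rho_i|_{H'}) \cong \psi' \otimes \rho_i$. Hence by Artin induction, statement (1) is equivalent to $L_F(\psi \otimes \rho_i) = L_F(\psi' \otimes \rho_i)$ for all irreducible $\rho_i$. For the implication (1) $\Rightarrow$ (3): taking $\rho_i$ to range over all irreducibles, the functions $L_F(\psi \otimes \rho_i)$ and $L_F(\psi' \otimes \rho_i)$ agree; since $\psi \otimes \rho_i$ and $\psi' \otimes \rho_i$ are honest representations of $G$ and L-functions determine representations, we get $\psi \otimes \rho_i \cong \psi' \otimes \rho_i$ for every irreducible $\rho_i$ — in particular for $\rho_i$ trivial, giving $\psi \cong \psi'$ directly. (One can even avoid the trivial-representation shortcut by comparing characters: $\chi_\psi(g)\chi_{\rho_i}(g) = \chi_{\psi'}(g)\chi_{\rho_i}(g)$ for all $i$ and all $g$, and summing against $\overline{\chi_{\rho_i}(g)}$ over $i$ using column orthogonality forces $\chi_\psi = \chi_{\psi'}$.) The implication (3) $\Rightarrow$ (1) is immediate: if $\psi \cong \psi'$ then $\psi \otimes \rho_i \cong \psi' \otimes \rho_i$, so the induced representations agree and hence so do the L-functions by Artin induction.

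For (3) $\Rightarrow$ (2) I again just tensor the isomorphism $\psi \cong \psi'$ with $\bar\alpha$, restrict suitably, induce, and apply Artin induction together with the fact that $L_K(\bar\alpha \otimes (\psi|_H)) = L_F(\Ind^G_H(\bar\alpha \otimes \psi|_H)) = L_F((\Ind^G_H \bar\alpha) \otimes \psi)$, and symmetrically on the $K'$ side; since $\psi \cong \psi'$ both the $\psi$-twisted and $\psi'$-twisted identities follow. The substantive direction is (2) $\Rightarrow$ (3). The idea is that $\psi$ and $\psi'$ are built precisely so that $\psi$ and $\psi'$ each decompose through the same ambient group $G$, and the two given L-function equalities encode, after Artin induction and the projection formula, the four character identities $\langle \psi, \psi \rangle_G$-type inner products matching across $K$ and $K'$. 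Concretely, $L_K(\bar\alpha \otimes \psi|_H) = L_F((\Ind^G_H\bar\alpha)\otimes \psi)$ and one observes $\Ind^G_H \bar\alpha \cong \bar\psi$ (the dual of $\psi = \Ind^G_H\alpha$, since induction commutes with taking duals for finite-index subgroups); so the first equality reads $L_F(\bar\psi \otimes \psi) = L_F(\bar{\psi'} \otimes \psi)$ and the second $L_F(\bar\psi \otimes \psi') = L_F(\bar{\psi'} \otimes \psi')$. Using injectivity of $\sigma \mapsto L_F(\sigma)$ these give $\bar\psi \otimes \psi \cong \bar{\psi'}\otimes\psi$ and $\bar\psi\otimes\psi' \cong \bar{\psi'}\otimes\psi'$; taking dimensions of $G$-invariants (i.e. the multiplicity of the trivial representation, equivalently $\langle \psi, \psi\rangle = \langle \psi', \psi\rangle$ and $\langle \psi, \psi'\rangle = \langle \psi', \psi'\rangle$) and combining with $\langle\psi-\psi',\psi-\psi'\rangle = \langle\psi,\psi\rangle - \langle\psi',\psi\rangle - \langle\psi,\psi'\rangle + \langle\psi',\psi'\rangle \ge 0$ forces $\langle \psi - \psi', \psi - \psi'\rangle = 0$, hence $\psi \cong \psi'$.

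The main obstacle — and the only place where the function-field hypothesis is genuinely used — is the injectivity statement: that equality of Artin L-functions $L_F(\sigma) = L_F(\tau)$ implies $\sigma \cong \tau$ for representations $\sigma, \tau$ of $G$. Over function fields one has to be slightly careful because, unlike the number field case, distinct places can have the same norm, so one cannot read off local factors purely from the Dirichlet-series shape; instead I would argue via the Euler product indexed by places of $F$, use that the unramified local factor at a place $v$ determines $\det(1 - \rho(\mathrm{Frob}_v) T^{\deg v})$, invoke the Čebotarev density theorem for the extension $N/F$ to see that Frobenius classes are equidistributed over all conjugacy classes of $G$, and conclude that the characters $\chi_\sigma$ and $\chi_\tau$ agree on a dense (hence, by continuity/finiteness, on every) conjugacy class. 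This is exactly the ingredient implicit in Nagata's original argument, and I would cite it (e.g. via \cite{Rosen} for the analytic properties of these L-functions) rather than reprove it; everything else is formal manipulation with induction, duals, tensor products and character inner products.
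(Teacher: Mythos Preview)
Your overall architecture is sound and in places cleaner than the paper's (the projection formula $\Ind^G_H(\alpha\otimes\rho_i|_H)\cong\psi\otimes\rho_i$ is a tidy way to package the computation), but the argument rests on a claim that is \emph{false} over function fields: the map $\sigma\mapsto L_F(\sigma)$ from representations of $G$ to meromorphic functions is \emph{not} injective. Indeed, the paper's own motivating example furnishes a counterexample. Take the two elliptic curves $E:y^2=x^3+1$ and $E':y^2=x^3+3x+1$ over $\mathbb F_7$, with function fields $K,K'$ and common Galois closure $N$ over $F=\mathbb F_7(x)$; here $G=\Gal(N/F)\cong C_2\oplus C_2$, and $\Ind^G_H 1_H=1\oplus\chi$, $\Ind^G_{H'}1_{H'}=1\oplus\chi'$ for two \emph{distinct} nontrivial characters $\chi,\chi'$ of $G$. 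Since $\zeta_K=\zeta_F\cdot L_F(\chi)$ and $\zeta_{K'}=\zeta_F\cdot L_F(\chi')$, the equality $\zeta_K=\zeta_{K'}$ forces $L_F(\chi)=L_F(\chi')$ with $\chi\not\cong\chi'$. Your \v{C}ebotarev sketch breaks exactly where you flagged the difficulty: over $\mathbb F_q(x)$ all places of a given degree have the same norm, so from the global $L$-series you can only recover the \emph{product} of local factors over places of each fixed degree, never an individual factor, and equidistribution of Frobenius classes then only yields the multiplicity of the trivial representation, not the full character.

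The repair is precisely what the paper does: extract from each $L$-function only the order of its pole at $s=1$, which equals the multiplicity of the trivial representation. For $(1)\Rightarrow(3)$, the pole of $L_F(\psi\otimes\rho_i)=L_F(\psi'\otimes\rho_i)$ at $s=1$ gives $\langle\psi,\bar\rho_i\rangle_G=\langle\psi',\bar\rho_i\rangle_G$ for every $i$, hence $\psi\cong\psi'$; no injectivity is needed. For $(2)\Rightarrow(3)$, the pole of $L_F(\bar\psi\otimes\psi)=L_F(\bar\psi'\otimes\psi)$ gives $\langle\psi,\psi\rangle_G=\langle\psi',\psi\rangle_G$ directly, and similarly for the second identity, after which your $\langle\psi-\psi',\psi-\psi'\rangle=0$ computation goes through unchanged. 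Your implications $(3)\Rightarrow(1)$ and $(3)\Rightarrow(2)$ are fine as written, since there you only use that isomorphic representations have equal $L$-functions, which is unproblematic.
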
  
\begin{proof}
First we show implication \textbf{from (1) to (3)}. For any fixed representation $\rho$ of $G$ we consider $L_{K} ( \alpha \otimes \rho|_{H} )$. This is a meromorphic L-function with no poles outside $s=0$ and $s=1$, see $\cite{Rosen}$. By properties of Artin L-functions this function has a pole at $s=1$ of order $(\alpha \otimes \rho|_H , 1 )_{H} $, possibly zero. Because of properties of complex representations: $(\alpha \otimes \rho|_H , 1 )_{H}  = (\rho|_H , \bar{\alpha} )_{H} $, where $\bar{\alpha}$ means the dual of the representation $\alpha$. By the Frobenius reciprocity we have $$ (\rho|_{H} , \bar{\alpha} )_{H} = (\rho, \Ind^{G}_{H} (\bar{\alpha}))_{G} .$$ 
In means that equality $L_{K} ( \alpha \otimes \rho_i|_{H} ) = L_{K'} ( \alpha' \otimes \rho_i|_{H'} )$ implies
$$(\rho_i, \Ind^{G}_{H} (\bar{\alpha}))_{G} = (\rho_i, \Ind^{G}_{H'} (\bar{\alpha}'))_{G}.$$ 
Since $\rho_i$ runs over all irreducible representations of $G$ it means that $$\Ind^{G}_{H} (\bar{\alpha}) = \Ind^{G}_{H'} (\bar{\alpha}')$$ and therefore $\Ind^{G}_{H} ({\alpha}) = \Ind^{G}_{H'} ({\alpha'})$.

\textbf{From (3) to (1)}. 
By the Frobenius reciprocity for each $i$, $j \in \{ 1 \dots  n \} $ we have: $$ (\Ind^{G}_{H} ({\alpha} \otimes \rho_i|_{H}), \rho_j  )_{G} = (\alpha \otimes \rho_i|_{H}, \rho_j|_{H})_{H} = (\alpha, (\bar{\rho_i} \otimes \rho_j )|_H )_{H} = ( \Ind^{G}_{H} ({\alpha}),   \bar{\rho_i} \otimes \rho_j  )_{G},$$
By our assumptions $\Ind^{G}_{H} ({\alpha}) = \Ind^{G}_{H'} ({\alpha'})$, therefore we have: $$( \Ind^{G}_{H} ({\alpha}),   \bar{\rho_i} \otimes \rho_j  )_{G} = ( \Ind^{G}_{H'} ({\alpha'}),   \bar{\rho_i} \otimes \rho_j  )_{G},$$ 
and hence for each irreducible representation $\rho_i$,  we have: $$\Ind^{G}_{H} ({\alpha} \otimes \rho_i|_{H}) = \Ind^{G}_{H'} ({\alpha'} \otimes \rho_i|_{H'}).$$

Finally, by the Artin induction property we have that: $$L_K( \alpha \otimes \rho_i |_H) = L_{\mathbb F_q(x)} ( \Ind^{G}_{H} ({\alpha} \otimes \rho_i|_{H}) ),$$ 
and therefore we are done.

\textbf{From (2) to (3)}. As before from equality of L-functions we obtained equality of poles and therefore following equalities: 
$$ (\bar{\alpha}\otimes(\psi|_{H}), 1_H)_H = (\bar{\alpha'}\otimes(\psi|_{H'}), 1_{H'})_{H'}  $$ 
and 
$$ (\bar{\alpha}\otimes(\psi'|_{H}), 1_H)_H = (\bar{\alpha'}\otimes(\psi'|_{H'}), 1_{H'})_{H'}  .$$ 
From the Frobenius reciprocity we have:

$$(\bar{\alpha}\otimes(\psi|_{H}), 1_H)_H = ({\alpha},  { \psi}|_{H} ) _H = (\psi, {\psi})_{G}$$
and
$$ (\bar{\alpha'}\otimes(\psi|_{H'}), 1_{H'})_{H'} = ({\alpha'},  { \psi}|_{H'} ) _{H'} = (\psi', {\psi} )_{G} $$

Therefore assumptions of (2) implies $(\psi, \psi)_G = (\psi, \psi')_G = (\psi', \psi')_G $. Let us consider the scalar of product of the virtual representation $\psi - \psi' $ with itself: $(\psi - \psi', \psi - \psi')_{G} =  (\psi,\psi)_{G} - 2(\psi, \psi')_G + (\psi', \psi')_G = 0 $. Which implies that $\psi$ and  $\psi'$ are isomorphic.

\textbf{From (3) to (2)}

Note that $L_K({\bar{\alpha}}\otimes(\psi|_{H}) ) = L_{\mathbb F_q(x)} (\Ind^G_{H}({\bar{\alpha}}\otimes(\psi|_{H}) ) ).$ Therefore in order to get equality of L-functions it is enough to show:
$$\Ind^{G}_{H}({\bar{\alpha}}\otimes(\psi|_{H}) )  = \Ind^{G}_{H'}({\bar{\alpha'}}\otimes(\psi|_{H'}) ).$$
Let $\rho_i$ runs over irreducible representations of $G$. By the Frobenius reciprocity we have: 

$$ (\Ind^{G}_{H}({\bar{\alpha}}\otimes(\psi|_{H}) ) , \rho_i)_{G} = ( {\bar{\alpha}}\otimes \psi|_{H} , \rho_i|_H )_{H} = ( \bar{\alpha} , \rho_i|_{H} \otimes \bar{\psi}|_{H})_{H} =  (\bar{\psi}, \rho_i\otimes\bar{\psi})_{G}.$$
Since $\psi = \psi'$ we have: 
$$ (\bar{\psi}, \rho_i\otimes \bar{\psi})_{G} = (\bar{\psi'}, \rho_i\otimes \bar{\psi})_{G} = (\bar{\alpha'}, \rho_i|_{H'}\otimes \bar{\psi}|_{H'} )_{H'} = (\bar{\alpha'}\otimes \psi|_{H'} , \rho_i|_{H'})_{H'}= (\Ind^G_{H'}({\bar{\alpha'}}\otimes(\psi|_{H'}) ) , \rho_i)_{G}$$

Which means that two representations are isomorphic: $$\Ind^G_{H}({\bar{\alpha}}\otimes(\psi|_{H}) )= \Ind^G_{H'}({\bar{\alpha'}}\otimes(\psi|_{H'}) ).$$ 
By replacing $\psi$ by $\psi'$ we obtained the second equality of L-functions.
\end{proof}

Note that if $\alpha $ is the trivial representation, then $L_K(\alpha \otimes \rho_H) = L_K(\rho_H)$. Therefore equality of L-functions for each irreducible $\rho$: $L_K(\rho|_H) = L_{K'}(\rho|_{H'})$ implies arithmetical equivalence and vice versa. 

This remark generalizes the fact that equality of zeta-functions in the number field case is the same as arithmetical equivalence. At first sight this generalization to the function field side seems to be not very natural, since it depends on the $k$-rational map of the curve $X$ to  $\mathbb P^{1}$ and not given in the intrinsic terms of $X$, but as we will see in the next section, this map is very important for the notion of arithmetical equivalence: it is possible to map curves $X$ and $Y$ to $\mathbb P^{1}$ in two different ways, such that they function fields are arithmetically equivalent under the first map, but not arithmetically equivalent under the second map. 

\section{On Gassmann Equivalence}
\subsection{Examples}
In order to find examples of arithmetically equivalent function fields we must find a non-trivial example of a Gassmann triple $(G,H,H')$ and solve the inverse Galois problem for $G$. Gassmann triples corresponding to field extensions of degree up to $15$ were classified in $\cite{Bart2}$. It follows that fields with Galois group $G = \Pgl_3(\mathbb F_2)\simeq \Psl_2(\mathbb F_7)$ give rise to at least two non-trivial Gassmann triples: one in degree seven and one in degree fourteen. Also, fields with Galois group $G= \Psl_2(\mathbb F_{11})$ give rise to at least one pair of arithmetically equivalent fields of degree eleven.

Using Magma we compute the Galois group of the splitting field of a given polynomial $f \in \mathbb F_q(x)[y] $ and find all intermediate subfields. By doing that for many different $f$  we find explicit equations of arithmetically equivalent function fields and compare their properties.
\subsubsection{Some Constructions}

 Here are some examples.

\begin{ex}
Let $p=7$, $q=p^2$ and let $\alpha$ be a generator of $\mathbb F^{*}_q$. Consider the function field extension of $\mathbb F_q(x)$ given by $f(y) = y^{p+1} + y - x^{p+1}$. It's splitting field $N$ has degree 168 and Galois group $\Gal(N: \mathbb F_q(x)) \simeq  \Pgl_3(\mathbb F_2)$. Inside this field we have at least two pairs of arithmetically equivalent global function fields: 
\begin{enumerate}
	\item $K_1 : y^7 + 5x^8y^3 + \alpha^4 x^{12} y + 6$ and $K_1' : y^7 + 5x^8y^3 + \alpha^{28}x^{12}y + 6$;
	\item $K_2 : y^{14} + 3x^8y^6 + \alpha^4x^{12}y^2 + 5$ and $K_2' : y^{14} + 3x^8y^6 + \alpha^{28}x^{12}y^2 + 5$;
\end{enumerate} 
\end{ex}

Note that since these fields arise from non-trivial triple $(G,H,H')$ it means that they are not isomorphic as extensions of $\mathbb F_q(x)$, but it may happen that $K$ and $K'$ isomorphic as abstract fields. Indeed, one could check that in this case we have $K_1 \simeq K_1'$ and $K_2\simeq K_2'$ as fields. 

An interesting question is: is it possible to find arithmetically equivalent function fields $K$ and $K'$ that are non isomorphic as abstract fields? It was mentioned in $\cite{GalGr}$ that a result by Serre states that the function field of the normal closure of the field given by $y^{p+1}-xy+1$ over $\mathbb F_p$ has Galois group $\Psl_2(\mathbb F_p)$. By working out this example for $p=7$ and $p=11$ one finds a positive answer to the above question: 
\begin{ex}\label{Ex1}
Consider the curve defined by the affine equation $y^8-xy+1$ over $\mathbb F_7$. The corresponding function field $N$ of the normal closure has degree 168 and the Galois group is $G = \Pgl_3(\mathbb F_2) \simeq \Psl_2(\mathbb F_7)$. Inside this field we have at least two pairs of arithmetically equivalent global function fields: 
\begin{enumerate}
	\item $K_1 : y^7 + 2y^3 + 2y + 6x^2$ and $K_1': y^7 + y^3 + 5y + 4x^2$;
	\item $K_2 : y^{14} + 4y^6 + 5y^2 + 5x^2$ and $K_2' : y^{14} + 4y^6 + 2y^2 + 5x^2$;
\end{enumerate} 
\end{ex}

Being arithmetically equivalent they share the same zeta-function and therefore their Weil-polynomials $f_K(T)$ are the same. Since $f_K(1) = h$ is the class number, we have that in contrast to the number fields they share the same class number, see $\cite{Bart1}$. But they have different class group, hence they are not isomorphic. Indeed according to Magma we have: 
$$\Cl(K_1) \simeq \Cl(K_2) \simeq \mathbb Z/ 8 \mathbb Z $$ but $$\Cl(K_1') \simeq \Cl(K_2') \simeq \mathbb Z/ 4 \mathbb Z \oplus \mathbb Z / 2\mathbb Z.$$  The fact that $\Cl(K_1) \simeq \Cl(K_2) $ and $\Cl(K_1') \simeq \Cl(K_2')$ is not coincidence: $K_1 \simeq K_2$ and $K_1' \simeq K_2'$ as abstract fields. 
 Another important remark here is that the genus of $K_1$ and $K_1'$ is one. They have a rational point over $\mathbb F_7$ and therefore correspond to two elliptic curves $E$ and $E'$ defined over $\mathbb F_7$. By considering a Weierstrass models of $E$ and $E'$ one gets degree two extensions of $\mathbb F_7(x)$, such that they are not arithmetically equivalent. More concretely, the curve defined by $y^7 +2y^3 +2y+6x^2$ is isomorphic to the elliptic curve $E_1$ defined by $y^2-x^3-x$ and the curve given by the equation $y^7 + y^3 + 5y + 4x^2$ is isomorphic to the elliptic curve $E_2$ defined by $y^2-x^3-3x$. This illustrates that the notion of arithmetical equivalence completely depends on the function field embedding.

\begin{ex}
Consider the curve defined by the affine equation $y^{12}-xy+1$ over $\mathbb F_{11}$. The corresponding function field $N$ of the normal closure has degree 660 and the Galois group is $G = \Psl_2(\mathbb F_{11})$. Inside this field we have at least one pair of arithmetically equivalent global function fields: $K_1 : y^{11} + 2y^5 + 8y^2 + 10x^2$ and $K_1': y^{11} + 2y^5 + 3y^2 + 10x^2$.
\end{ex} 
One checks that $K_1$ and $K_1'$ are not isomorphic as global fields, also have genus one and that $\Cl(K_1) \simeq \Cl(K_1') \simeq \mathbb Z/ 12 \mathbb Z$. 

\subsubsection{Construction by Torsion Points on Elliptic Curves}

All the above examples work only for some particular characteristic $p$ of the base field. Moreover, for any example of non-isomorphic Gassmann equivalent pair $(K,K')$ given above fields $K$ and $K'$ actually become isomorphic after the constant field extension. It means that corresponding curves $X$ and $Y$ are twists of each other. In this section we discuss an algorithm to construct examples with arbitrary characteristic $p$ of  the ground field, provided $p$ is grater than three. By using this approach we found geometrically non-isomorphic arithmetically equivalent global fields. 

Let $l$ denote a prime number. As it follows from \cite{Bart2} extensions with Galois group $G \simeq \gl_2(\mathbb F_l)$ play an important role in the construction of arithmetically equivalent fields. If $E$ is an ordinary elliptic curve defined over $\mathbb Q$, then the group $E[l]$ of $l$-torsion points of $E$ allows us to construct arithmetically equivalent number fields, as in \cite{Bart3}. But in contrast to the number field case, in the function field settings torsion points on elliptic curves over $\mathbb F_q(t)$ do not always allow to construct extensions with Galois group isomorphic to $\gl_2(\mathbb F_l)$. The crucial difference appears because of constant field extensions.

More concretely, consider the function field $F$ of the projective line defined over $\mathbb F_q$:  $F \simeq \mathbb F_q(t)$, where $q=p^m$, $p$ is prime. Suppose for simplicity that $p>3$ and pick parameters $a$, $b \in F$. Consider an elliptic curve $E$ over $F$ defined by the equation $y^2 = x^3 + ax +b$. For any prime number $l \ne p$ let us consider $\phi_{l,E}(u)$ the $l$-division polynomial of $E$. This is a polynomial with roots corresponding to $x$-coordinates of $l$-torsion points of the elliptic curve $E$, for example: $$\phi_{3,E}(u) = 3u^4+au^2+12bu-a^2.$$

Finally, let $R(t, y) = \Res_{x}(\phi_{l,E}(x), y^2-(x^3+ax+b))$ be the resultant with respect to $x$. This is a polynomial in $t$ and $y$, roots of which correspond to the coordinates of $l$-torsion points of $E$. Generically this is separable polynomial and it generates the finite field extension $K(y)$ of $\mathbb F_q(t)$: $ K(y) = \frac{\mathbb F_q(t,y)}{R(t,y)} $. We will denote the Galois group of the normal closure of $K$ over $F$ by $G$. Let $H$ be the subgroup of $\mathbb F^{\times}_l$ generated by $q$. The analogue of the so-called \emph{Serre's open image theorem} for function fields proved by Igusa in 1959 states that for big enough $l$ depending on $q$ we have the following exact sequence, see \cite{Igusa2}: 

$$ 1 \to \SL_2(\mathbb F_l) \to G \to H  \to 1 .$$
Moreover, in this sequence $\SL_2(\mathbb F_l)$ corresponds to the geometric extension of $F$ and $H$ corresponds to the constant field extension. If $q = 1 \mod l$ then $H$ is trivial and we obtain a geometric extension with $G \simeq \SL_2(\mathbb F_l)$. By taking a quotient of $G$ by $\pm 1$, we will get $\Psl_2(\mathbb F_l)$. The action of $\pm 1$ is given by gluing points with the same $x$-coordinate. Therefore, the splitting field of $\phi_{l,E}(x)$ is the geometric extension of $\mathbb F_q(t)$ with Galois group $\Psl_2(\mathbb F_l)$. Now if $l=7$ or $l=11$ we obtain a family  of arithmetically equivalent pairs.

\begin{ex}
In the above settings let $p=29$ and $l=7$, $a=t$, $b=t+1$. Then: $\phi_{7,E}(x)$ is a polynomial of degree $24$. The spliting field of $\phi_{7,E}(x)$ is a finite geometric extension $K / \mathbb F_{29}(t)$ with the Galois group isomorphic to $\Psl_2(\mathbb F_7)$. Inside this normal closure following two arithmetically equivalent fields are not isomorphic:
 $$K [x] /( x^7 + 20tx^6 + 14t^2x^5 + (6t^3 + 11t^2 + 22t + 11)x^4 + (5t^4 + 23t^3
    + 17t^2 + 23t)x^3 + $$ $$+(20t^5 + 13t^4 + 26t^3 + 13t^2)x^2 + (5t^6 +
    20t^5 + 5t^3 + 21t^2 + 14t + 18)x +$$ $$ + 23t^7 + 26t^6 + 19t^5 + 10t^4 +
    5t^3 + 13t^2 + 25t)$$

       and 
    $$K[x] / (x^7 + 16tx^6 + 2t^2x^5 + (18t^3 + 10t^2 + 20t + 10)x^4 + (27t^4 + 3t^3
    + 6t^2 + 3t)x^3 + $$ $$ + (27t^5 + 17t^4 + 5t^3 + 17t^2)x^2 + (t^6 + 7t^5 +
    16t^4 + 15t^3 + 12t^2 + 8t + 2)x + $$ $$ + 28t^7 + t^6 + 2t^5 + t^4);$$
\end{ex}

According to Magma function fields given above have genus 1 and a $\mathbb F_{29}$-rational point, therefore they are isomorphic to the function fields of two elliptic curves. Those elliptic curves have different j-invariant, namely 16 and 15 respectively. Therefore, they are geometrically non-isomorphic.

 \subsection{Properties of Arithmetically Equivalent Fields}

In this section we will briefly discuss common properties of arithmetically  equivalent global fields that will shed some light on the previous examples.
We start from one remarkable statement proved in the article $\cite{Perl}$: 
\begin{lem}
Let $G$ be a finite group and $H \subset G$ a subgroup of index $n$. Suppose one of the following conditions holds:
\begin{enumerate}
	\item $n \le 6$;  
	\item $H$ is cyclic;
	\item $G = \mathbb S_n$ the full symmetric group of order $n$;
	\item $n=p$ is prime and $G = \mathbb A_p$ is the alternating group of order $p$.
\end{enumerate}
then any Gassmann triple $(G,H,H')$ is trivial. 
\end{lem}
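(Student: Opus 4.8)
The plan is to translate the statement into a question about permutation characters and then dispatch the four cases. Recall that $(G,H,H')$ is a Gassmann triple precisely when $\Ind_H^G(1_H)\cong\Ind_{H'}^G(1_{H'})$, i.e. when the permutation characters of $G$ on $G/H$ and on $G/H'$ agree; I will freely use this. Evaluating at the identity gives $|H|=|H'|$, hence $[G:H']=n$ as well, and the kernel of a permutation representation is $\{g:\chi(g)=n\}$, so $H$ and $H'$ have the same normal core. Quotienting by this common core, I may assume throughout case (1) that $G$ acts faithfully and transitively on $n$ points with $H$ a point stabilizer of this action and $H'$ a point stabilizer of another transitive action of the same degree; in cases (2)--(4) it is cleaner to argue directly, without this reduction.

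\emph{Case (2), $H$ cyclic.} For each integer $d$ the set of elements of $G$ of order $d$ is a union of conjugacy classes, so summing the Gassmann identities $|[c]\cap H|=|[c]\cap H'|$ over those classes shows that $H$ and $H'$ contain equally many elements of each order. If $H$ is cyclic of order $m$ it has $\varphi(m)\ge 1$ elements of order $m$, so $H'$ contains an element of order $m=|H'|$ and is therefore also cyclic. Now fix a generator $h$ of $H$. Since $h\in[h]\cap H$ we get $|[h]\cap H'|=|[h]\cap H|\ge 1$, so some $h'=ghg^{-1}$ lies in $H'$; then $h'$ has order $m=|H'|$, hence generates $H'$, and $gHg^{-1}=g\langle h\rangle g^{-1}=\langle h'\rangle=H'$. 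Thus $H$ and $H'$ are conjugate.

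\emph{Cases (3) and (4), $G=S_n$ or $G=A_p$ with $p$ prime.} Here I would classify the index-$n$ subgroups up to conjugacy (for $n\le 6$ this is subsumed by case (1), so I may assume $n$ large). For $G=S_n$ with $n\ne 6$: the action on $G/H$ yields $\phi\colon S_n\to\mathrm{Sym}(G/H)\cong S_n$ whose kernel is a normal subgroup of $S_n$ contained in the proper subgroup $H$ and not containing $A_n$ (as $[S_n:H]=n\ge 3$), hence trivial; so $\phi$ is an isomorphism, the corresponding automorphism of $S_n$ carries the standard point stabilizer $S_{n-1}$ to $H$, and since $\mathrm{Out}(S_n)=1$ this automorphism is inner, so $H$ is conjugate to $S_{n-1}$ and there is a unique conjugacy class of index-$n$ subgroups. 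For $n=6$ there are two such classes, the standard $S_5$ and its image under the outer automorphism $\tau$; they are not Gassmann equivalent because the two permutation characters are $1+\chi$ and $1+\tau^{*}\chi$ with $\chi$ the degree-$5$ constituent, and $\tau^{*}\chi\ne\chi$ since $\tau$ does not fix the class of transpositions (on which $\chi$ and $\chi\otimes\mathrm{sgn}$ take opposite values). The case $G=A_p$ is analogous: for $p\ge 5$ the action on $G/H$ embeds $A_p$ as a transitive subgroup of $S_p$ of index $2$, which must be $A_p$ itself, giving an automorphism of $A_p$; since $\mathrm{Aut}(A_p)=S_p$ acts on the $p$ points by permutations, $H$ is conjugate in $S_p$, and hence in $A_p$, to the point stabilizer $A_{p-1}$, so again there is a single conjugacy class.

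\emph{Case (1), $n\le 6$.} This is the step I expect to be the main obstacle, because — unlike the other three — I do not see a uniform conceptual argument for it. After the common reduction, $G$ is a transitive permutation group of degree $n\le 6$ and $H$ is a point stabilizer, so it suffices to run through the finitely many transitive groups of each degree $2,3,4,5,6$ and verify in each that a point stabilizer has no Gassmann mate outside its conjugacy class; equivalently, one proves that $7$ is the smallest index at which a nontrivial Gassmann triple occurs. The bookkeeping can be compressed — imprimitive actions can be reduced via the stabilizer of a block system, and the primitive groups of degree $\le 6$ are very restricted (cyclic groups of prime degree, $V_4\le A_4\le S_4$, $D_5\le F_{20}\le A_5\le S_5$, and $\mathrm{PSL}_2(5)\le\mathrm{PGL}_2(5)\le A_6\le S_6$), the degree-$6$ ones being quickest to settle by a short character computation — but some finite case analysis of these groups seems unavoidable, and that is where the real work of case (1) lies.
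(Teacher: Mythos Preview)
The paper does not actually give a proof of this lemma: it is quoted as ``one remarkable statement proved in the article \cite{Perl}'' and left at that. So there is no in-paper argument to compare your proposal against; what you have written is an attempt to reconstruct Perlis' proof.

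Your treatments of cases (2)--(4) are sound. The argument for cyclic $H$ is clean and correct. For $G=S_n$ with $n\neq 6$ the reduction to $\mathrm{Out}(S_n)=1$ is standard, and your handling of $n=6$ is right in substance: the two conjugacy classes of index-$6$ subgroups give permutation characters $1+\chi$ and $1+\tau^{*}\chi$, and $\tau^{*}\chi\neq\chi$ because $\tau$ moves the class of transpositions while $\chi$ separates that class from its image (your parenthetical about $\chi\otimes\mathrm{sgn}$ is a distraction here --- the point is simply that $\chi$ takes the value $3$ on a transposition and $-1$ on a product of three disjoint transpositions). For $A_p$ your argument is fine once one notes the tiny primes $p=2,3$ are immediate, and that conjugacy of point stabilizers in $S_p$ passes to $A_p$ by transitivity of $A_p$.

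The only genuine gap is the one you already flag: case (1). Your reduction to a faithful transitive action is valid (the common core is preserved under the Gassmann hypothesis, and conjugacy in the quotient lifts since the core lies in both $H$ and $H'$), but after that you stop short of the actual verification. Perlis' original paper really does go through the transitive groups of degree $\le 6$, and while the list is short and each check is easy, that enumeration is the content of this case; without it the lemma is not proved. If you want this to stand on its own, you should either carry out the degree-by-degree check (the primitive groups you list are the right ones, and the imprimitive ones reduce as you say) or cite \cite{Perl} for it, as the paper does.
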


Taking into account our main theorem this statement has the following application to the function field side: 

\begin{cor}
Let $K$ be a finite separable geometric extension of $\mathbb F_q(t)$ of degree $n$ and let $N$ be its Galois closure with Galois group $G$. Let $H$ be a subgroup of $G$ such that $K = N^{H}$. Suppose one of the conditions from the previous lemma holds. Then for any $H' \subset G$ the fields $K$ and $K'=N^{H'}$ are isomorphic if and only if for each irreducible representation $\rho$ of $G$ we have $L_K(\rho|_H) = L_{K'} (\rho|_{H'})$.
\end{cor}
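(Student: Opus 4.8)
The plan is to obtain the corollary by combining Theorem~\ref{main} (applied with trivial twisting representations) with the group-theoretic lemma above; the only real content is translating between isomorphism of fields and conjugacy of subgroups.

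First I would treat the forward implication, which uses none of the four hypotheses on $(G,H)$. If $K$ and $K'=N^{H'}$ are isomorphic as extensions of $\mathbb F_q(t)$, then, both being intermediate fields of the Galois extension $N/\mathbb F_q(t)$, the Galois correspondence provides $g\in G$ with $H'=gHg^{-1}$. For any representation $\rho$ of $G$, conjugation by $g$ is an isomorphism $H\to H'$ under which $\rho|_{H'}$ corresponds to $\rho|_H$ (an explicit intertwiner being $\rho(g)$), and induction of conjugate representations along conjugate subgroups gives isomorphic $G$-representations; hence $\Ind^G_H(\rho|_H)\simeq\Ind^G_{H'}(\rho|_{H'})$. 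Applying the invariance of Artin $L$-functions under induction --- the ``Artin induction property'' already invoked in the proof of Theorem~\ref{main} --- yields
$$L_K(\rho|_H)=L_{\mathbb F_q(t)}(\Ind^G_H(\rho|_H))=L_{\mathbb F_q(t)}(\Ind^G_{H'}(\rho|_{H'}))=L_{K'}(\rho|_{H'})$$
for every $\rho$, in particular for $\rho=\rho_i$.

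For the converse, assume $L_K(\rho_i|_H)=L_{K'}(\rho_i|_{H'})$ for all $i$. Put $\alpha=1_H$, $\alpha'=1_{H'}$ in Theorem~\ref{main}: since $1_H\otimes\rho_i|_H=\rho_i|_H$, these are precisely the equalities in condition~(1) of that theorem, so condition~(3) holds, i.e.\ $\Ind^G_H(1_H)\simeq\Ind^G_{H'}(1_{H'})$. (If one insists that $N$ be the common Galois closure of $K$ and $K'$ before quoting Theorem~\ref{main}, note that $K'\subseteq N$, so that closure lies inside $N$; in any case the argument of Theorem~\ref{main} does not use minimality of $N$.) Comparing dimensions, $[G:H']=[G:H]=n$, so $H'$ has index $n$ and $(G,H,H')$ is a Gassmann triple. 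Whichever of the conditions $n\le 6$, $H$ cyclic, $G=\mathbb S_n$, or ($n=p$ prime and $G=\mathbb A_p$) is assumed, the lemma forces this Gassmann triple to be trivial, so $H$ and $H'$ are conjugate in $G$; by the Galois correspondence once more, $K\simeq K'$ as extensions of $\mathbb F_q(t)$.

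I do not anticipate a genuine obstacle here: the statement is a formal consequence of Theorem~\ref{main} and the lemma. The only points needing care are the bookkeeping observation that matching the listed $L$-functions forces $[G:H']=n$ (so that the lemma is applicable to $H'$ as well), and the standard fact --- occasionally left implicit --- that two intermediate fields of $N/\mathbb F_q(t)$ are isomorphic over the base exactly when the corresponding subgroups of $G$ are conjugate.
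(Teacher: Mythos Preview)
Your proposal is correct and is exactly the argument the paper intends: the corollary is stated there without proof, as an immediate consequence of Theorem~\ref{main} with $\alpha=1_H$, $\alpha'=1_{H'}$ together with the preceding lemma, and you have spelled this out accurately. Your side remarks (that the proof of Theorem~\ref{main} does not require $N$ to be the minimal common closure, and that the equality of induced representations forces $[G:H']=n$) are apt pieces of housekeeping, though not strictly needed for the lemma, whose hypotheses are on $H$ alone.
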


\subsubsection{Adele Rings}
Let $K$ be a global field and let $A_K$ denote the adele ring of $K$. By definition this is the restricted product of all  local completions $K_v$ with respect to $\mathcal O_{v}$, where $v$ denotes a place of $K$. It has a topology coming from restricted product and therefore it is a topological abelian group.

The first remarkable fact is that in the number field case we have the following implications: $A_K \simeq A_L \Rightarrow \zeta_K = \zeta_L \iff $ $K$ and $L$ arithmetically equivalent.  And moreover there exists an example of arithmetically equivalent number fields with non-isomorphic adele rings, see $\cite{Perl}$. 

On the other hand in the function field side we have the following: 

$A_K \simeq A_L \iff \zeta_K = \zeta_L \Leftarrow$  $K$ and $L$ arithmetically equivalent. For the proof of equivalence see $\cite{Turn}$. Roughly speaking the reason here is that in the function fields case all local completions $\mathcal O_v$ depend only on the degree of $v$ whereas in the number field case they also depend on the primes below it. 

\subsubsection{Ideal Class Group}

Arithmetically equivalent function fields share the same zeta-function and therefore they also share the same class-number. But their class-groups may be different, as in example [\ref{Ex1}]. Nevertheless we have a good bound on that difference. Namely, to each Gassmann-triple $(G,H,H')$ Perlis in \cite{Perl3}  attached a natural number $v$, which divides the order of $H$. Suppose that $K$ and $K'$ are two number fields corresponding to the triple $(G,H,H')$, then: if the prime number $l$ does not divide $v$, then the $l$-part of the class-group of $K$ and $K'$ are isomorphic: $\Cl_l(K) \simeq \Cl_l(K') $. His argument works in the following way: first for any Gassmann triple $(G,H,H')$ let us fix an isomorphism $\alpha$ between $G$-modules $\alpha: \mathbb Q(G/H) \simeq \mathbb Q(G/H')$. Once isomorphism $\alpha$ is fixed one could also fix a basis of the vector spaces $\mathbb Q(G/H)$, $\mathbb Q(G/H')$ and then $\alpha$ could be written as matrix $M_{\alpha}$. Let $v_{\alpha} = \det(M_{\alpha})$, which obviously depends only on the $\alpha$ and not on the basis of the vector spaces. Now given a Gassmann triple $(G,H,H')$ he defined a natural number $v = \gcd_{\alpha}(|v_{\alpha}|),$ where $\alpha$ runs over all isomorphism such that $M_{\alpha}$ has integral coefficients. 

On the other hand, from this isomorphism $\alpha$ he constructed a homomorphism $\phi_{\alpha}$ of multiplicative groups $\phi_{\alpha} : K^{*} \to (K')^*$. Apparently this map factors through fraction ideals and therefore induced morphism between ideal class-group. This map has kernel and co-kernel and R.Perlis proved that order of those groups divides natural number $v_{\alpha}$ associated to $\alpha$.  From this one easily deduced the argument about isomorphism of $l$-parts of class-groups for $l$ not dividing $v$.  This argument also works in the function fields case. It was mentioned in \cite{Perl3} that for the Gassmann triple $(G,H,H')$ with $G \simeq \Psl_2(\mathbb F_7)$ one has $v=8$ and therefore for each pair $(K, K')$ of arithmetically equivalent function fields coming from this triple and each prime number $l \ne 2$: $\Cl_l(K) \simeq \Cl_l(K') $. 

\section{On Monomial Representations}
The main purpose of this section is to prove theorem [\ref{Group}]. Before doing that let us recall some basic facts from the theory of induced representations. Let $G$ be a finite group and $H$ a subgroup. Let $\chi$ be a one-dimensional representation of $H$. Consider the induced representation $\psi$ of $G$: $\psi~=~\Ind^{G}_{H} \chi$. By definition $\psi$ acts on the vector space $V$ which could be associated with the direct sum of lines $\oplus \mathbb C_{g_i}$ where each $\mathbb C_{g_i}$ corresponds to the $i$-th left coset $G/H$. Such a pair $(\psi, \oplus \mathbb C_{g_i})$ is called a \emph{monomial representation}. Let $H'$ be another subgroup of $G$ and $\psi' = \Ind^{G}_{H'} \chi'$ for one-dimensional $\chi'$ of $H'$. We will say that we have morphism of pairs $(\psi, \oplus \mathbb C_{g_i})$,  $(\psi' , \oplus \mathbb C_{g'_j})$ if we have a morphism of representations $f$: $\psi \to \psi'$ such that for each line $\mathbb C_{g_i}$ we have $f(\mathbb C_{g_i}) \subset \mathbb C_{g'_j}$ for some $j$.  
\begin{lem}
Suppose we have an isomorphism of monomial representations $(\psi, \oplus \mathbb C_{g_i} )~=~(\psi' , \oplus \mathbb C_{g'_j})$. Then $H$ is a conjugate of $H'$ in $G$.
\end{lem}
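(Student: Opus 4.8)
The plan is to extract the conjugating element directly from the datum of the isomorphism of monomial representations. Suppose $f : \psi \to \psi'$ is an isomorphism carrying each coordinate line $\mathbb{C}_{g_i}$ into some coordinate line $\mathbb{C}_{g'_{\sigma(i)}}$; since $f$ is an isomorphism, the induced map $\sigma$ on the index sets of cosets is a bijection. Both $G$-sets $G/H$ and $G/H'$ act by permuting the coordinate lines (the monomial structure), and because $f$ intertwines $\psi$ and $\psi'$, the bijection $\sigma : G/H \to G/H'$ is $G$-equivariant: for $g \in G$ one has $f \circ \psi(g) = \psi'(g) \circ f$, and comparing which line the left and right sides send $\mathbb{C}_{g_i}$ into forces $\sigma(g \cdot g_i H) = g \cdot \sigma(g_i H)$ at the level of coset indices.

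Once we have a $G$-equivariant bijection $G/H \to G/H'$, the conclusion is the standard fact that transitive $G$-sets with a $G$-map between them have conjugate point stabilizers. Concretely, let the coset $g_0 H \in G/H$ map to $g'_0 H' \in G/H'$. The stabilizer of $g_0 H$ in $G$ is $g_0 H g_0^{-1}$ and the stabilizer of $g'_0 H'$ is $g'_0 H' (g'_0)^{-1}$; equivariance of $\sigma$ together with the fact that a $G$-map sends the stabilizer of a point into the stabilizer of its image (and a bijective $G$-map makes this an equality, since the reverse map does the same) yields $g_0 H g_0^{-1} = g'_0 H' (g'_0)^{-1}$. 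Hence $H$ and $H'$ are conjugate in $G$ via the element $(g'_0)^{-1} g_0$.

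The one point that needs a little care — and which I expect to be the main, though modest, obstacle — is verifying that the line-permutation $\sigma$ really is $G$-equivariant and not merely a bijection of index sets. This uses crucially that the isomorphism $f$ is assumed to be a morphism \emph{of pairs}, i.e. compatible with the monomial decompositions on both sides; without that hypothesis one only knows $\psi \cong \psi'$ as representations, which is weaker and does not suffice. So the argument should begin by unwinding the definition of a morphism of pairs, note that an isomorphism of pairs induces a bijection on the sets of coordinate lines, and then check the intertwining relation $f\psi(g) = \psi'(g)f$ translates into equivariance of $\sigma$. After that, everything reduces to the elementary orbit–stabilizer bookkeeping described above, and no representation-theoretic input (characters, Frobenius reciprocity) is needed at all.
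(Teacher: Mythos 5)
Your argument is correct and complete. The paper itself gives no proof of this lemma — it simply cites a reference — so there is no in-paper argument to compare against, but your self-contained proof is the right one. The key observation is exactly as you present it: because $f$ is an isomorphism of representations that respects the coordinate-line decompositions, no two lines $\mathbb{C}_{g_i}$, $\mathbb{C}_{g_k}$ can land in the same target line (that would force a kernel), so $f$ induces a genuine bijection $\sigma$ between the index sets. The intertwining relation $f\psi(g)=\psi'(g)f$ then translates, on the level of which line goes where, into $G$-equivariance of $\sigma:G/H\to G/H'$, since for a monomial representation induced from a one-dimensional character the permutation of lines is precisely the left-translation action on cosets. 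From there the conclusion is the textbook fact that a $G$-equivariant bijection of transitive $G$-sets gives equality of point stabilizers (using equivariance of both $\sigma$ and $\sigma^{-1}$), and $\mathrm{Stab}(g_0H)=g_0Hg_0^{-1}$, $\mathrm{Stab}(g_0'H')=g_0'H'(g_0')^{-1}$ yields the conjugacy, with conjugating element $(g_0')^{-1}g_0$ as you compute. The one place you flagged as needing care — that the line-permutation is $G$-equivariant, not just a bijection — is indeed the only nontrivial step, and your unwinding of the morphism-of-pairs definition handles it correctly; no character-theoretic input is needed.
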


\begin{proof}
For the reference see \cite{Mono}.
\end{proof}

\begin{ex}
Let $G$ be a group of multiplicative quaternions with generators $a$ and $b$. Consider the subgroups $H_a = \{1, a, -1, -a  \}$ and $H_b = \{1, b, -1, -b \}$. Let $\chi_a$ be an isomorphism $H_a \simeq \mu_4^{*}$ sending element $a$ to $i$. Let $\chi_b$ be the same character for $H_b$. Then one has $\Ind^G_{H_a} \chi_a \simeq \Ind^G_{H_b} \chi_b $ as representations, but not as monomial representations.

\end{ex}

Let us recall settings for theorem [\ref{Group}]. 
 Let $G$ be a finite group and $H$ a subgroup of index $n$ and $C_l=\mu_l$ be a cyclic group of order $l$, where $l$ is an odd prime. Let us consider semi-direct products $\tilde{G}=C_l^{n}\rtimes G$ and $\tilde{H}=C_l^{n}\rtimes H$, where $G$ acts on $C_l^{n}$ by permuting its component as cosets $G/H$. Let $g_1, \dots g_n$ be representatives of left cosets $G = \cup_i g_iH$. Without loss of generality we assume that $g_1 = e$ is the identity element. We define $\chi$ to be the homomorphism from $\tilde{H} \to  \mu_l$, sending an element $(c_1, \dots, c_n, g )$ to $ c_1$. This is indeed a homomorphism, since $H$ fixes the first coset. Then the following is true: 
 
 \begin{theorem}[Bart de Smit]
For any  subgroup $\tilde{H}' \subset \tilde{G}$ and any abelian character $\chi'$ : $\Tilde{H}' \to \mathbb C^{*}$ if $\Ind_{\tilde{H}'}^{\tilde{G}} (\chi') =  \Ind_{\tilde{H} }^{\tilde{G}} (\chi ) $ then $\tilde{H'}$ and $\tilde H$ are conjugate in $\tilde{G}$.
  \end{theorem}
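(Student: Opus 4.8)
The plan is to derive the theorem from the preceding Lemma on monomial representations: it is enough to show that \emph{any} isomorphism of $\tilde G$-representations $\phi\colon \Ind_{\tilde H'}^{\tilde G}(\chi')\xrightarrow{\ \sim\ }\Ind_{\tilde H}^{\tilde G}(\chi)$ automatically carries the monomial line attached to each coset $x\tilde H'$ onto one of the monomial lines attached to the cosets of $\tilde H$; once this is established, the Lemma forces $\tilde H'$ and $\tilde H$ to be conjugate in $\tilde G$. The whole argument revolves around the normal abelian subgroup $A\defeq C_l^{n}\triangleleft\tilde G$, whose character group $\hat A$ I identify with $\mathbb F_l^{\,n}$ after fixing a nontrivial character of $C_l$.

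First I would analyse $V\defeq\Ind_{\tilde H}^{\tilde G}(\chi)$. Taking the cosets $(1,g_i)\tilde H=A\times g_iH$ as representatives of $\tilde G/\tilde H$, a direct computation shows that $A$ acts on the line $\mathbb C_{(1,g_i)}$ through the ``$i$-th coordinate character'' $\theta_i\colon(c_1,\dots,c_n)\mapsto c_i$; this uses precisely that $\chi$ reads off only the coordinate fixed by $H$. Hence $\Res_A V=\bigoplus_{i=1}^{n}\theta_i$ is multiplicity-free, the $\theta_i$ are pairwise distinct and correspond under $\hat A\cong\mathbb F_l^{\,n}$ to the standard basis vectors $e_1,\dots,e_n$, and the monomial lines of $V$ are \emph{exactly} the $A$-isotypic components $V_{\theta_i}$, each one-dimensional. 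Since $W\defeq\Ind_{\tilde H'}^{\tilde G}(\chi')\cong V$, comparing dimensions gives $[\tilde G:\tilde H']=n$, and $\Res_A W\cong\bigoplus_i\theta_i$ is again multiplicity-free with character set the standard basis of $\mathbb F_l^{\,n}$.

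The heart of the argument — and the step I expect to be the main obstacle — is to prove that $A\subseteq\tilde H'$. Set $A'\defeq A\cap\tilde H'$. Decomposing $\Res_A W$ by Mackey along the $A$-orbits on $\tilde G/\tilde H'$ (which, as $A$ is normal, are indexed by $\tilde G/A\tilde H'$) writes it as a sum of representations $\Ind_{xA'x^{-1}}^{A}(\cdot)$, whose character set is a coset of the annihilator $(xA'x^{-1})^{\perp}\le\hat A$, a subgroup of order $[A:A']$. Each such coset must be contained in the standard basis $\{e_1,\dots,e_n\}$ of $\mathbb F_l^{\,n}$. Now an elementary argument shows this is impossible unless the subgroup is trivial: if a coset $c+S\subseteq\{e_1,\dots,e_n\}$ has $|S|\ge 2$, then $|S|=l^{k}$ with $k\ge 1$, so $|S|\ge l\ge 3$ (here the hypothesis that $l$ is an \emph{odd} prime enters); since $0\in S$ we get $c=e_{i_0}$ for some $i_0$ and every element of $S$ is of the form $e_j-e_{i_0}$, hence has at most two nonzero coordinates; but $|S|\ge 3$ supplies two further basis vectors $e_{j_1},e_{j_2}\in c+S$ with $i_0,j_1,j_2$ distinct, whence $S\ni(e_{j_1}-e_{i_0})+(e_{j_2}-e_{i_0})=e_{j_1}+e_{j_2}-2e_{i_0}$ has three nonzero coordinates (as $-2\ne 0$ in $\mathbb F_l$) — a contradiction. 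Therefore $(xA'x^{-1})^{\perp}$ is trivial, so $A'=A$, i.e. $A\subseteq\tilde H'$.

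Finally, once $A\subseteq\tilde H'$, the subgroup $A$ stabilises every monomial line of $W$ and acts on it through a character of $A$; since $\Res_A W$ is multiplicity-free with the $n$ distinct characters $\theta_1,\dots,\theta_n$ and there are exactly $n=[\tilde G:\tilde H']$ monomial lines, these lines must be precisely the $A$-isotypic components $W_{\theta_i}$. The isomorphism $\phi$ is in particular $A$-equivariant, hence maps $W_{\theta_i}$ onto $V_{\theta_i}$, i.e. carries the monomial lines of $W$ onto the monomial lines of $V$. Thus $\phi$ is an isomorphism of monomial representations, and the Lemma yields that $\tilde H'$ is conjugate to $\tilde H$ in $\tilde G$, which completes the proof.
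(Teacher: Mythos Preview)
Your argument is correct and, like the paper, ultimately reduces the statement to the preceding Lemma on monomial representations; the route, however, is genuinely different. The paper picks the element $\alpha=(\zeta,1,\dots,1,e)\in A=C_l^n$, observes that $\psi(\alpha)=\mathrm{diag}(\zeta,1,\dots,1)$, and uses a trace estimate (if $\psi'(\alpha)$ were not diagonal in the monomial basis of $W$ its trace would have absolute value at most $n-2$, whereas $|n-1+\zeta|>n-2$ since $l$ odd forces $\zeta\neq-1$) to conclude that $\psi'(\alpha)$ is diagonal; it then reads the intertwining relation $A\psi(\alpha)=\psi'(\alpha)A$ entrywise (varying the position of $\zeta$) to force the intertwiner $A$ to be a generalized permutation matrix. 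You instead apply Mackey to $\Res_A W$ and run a short combinatorial argument in $\hat A\cong\mathbb F_l^{\,n}$ to prove the structural fact $A\subseteq\tilde H'$; once this is known, the monomial lines of both $V$ and $W$ are identified intrinsically as the one-dimensional $A$-isotypic components, and any $\tilde G$-intertwiner is automatically monomial by $A$-equivariance. The paper's approach is more elementary (no Mackey, just characters and a $2\times2$ matrix comparison); yours is more conceptual, isolates the key reason the argument works (the monomial decomposition \emph{is} the $A$-isotypic decomposition), and would adapt more readily to variants. It is worth noting that both proofs invoke the oddness of $l$ at essentially the same moment: for you it gives $|S|\ge l\ge3$ so that three distinct basis vectors lie in the coset, for the paper it gives $\zeta\neq-1$ and hence the strict trace inequality.
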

\begin{proof}
\begin{bf} Step 1.\end{bf} Let $g_1$, \dots ,$g_n$ be a representatives of cosets $G/H$ with $g_1$ equals to the identity element. Note that $g_i$ for $i \ne 1$ cannot fix the first coset. Consider cosets $\tilde{G} / \tilde{H}$.  We claim that each such coset for $i>1$ could be represented as $\gamma_i = (1,1, \dots, 1 , g_i)$, where $g_i \in G/H$. This is true since elements of the form $(\zeta_1,\zeta_2, \dots , \zeta_n,1)$ are in $\tilde{H}$, where $(\zeta_1,\zeta_2, \dots , \zeta_n) \in C_l^{n}$.

\begin{bf} Step 2.\end{bf}Let us consider element $\alpha =  ( \zeta, 1 ,\dots, 1,\dots, 1) \in \tilde{H}$ where $\zeta \in \mu_l$, $\zeta \ne 1$ is in the first position. Such element fixes each coset $\gamma_i \tilde{H}$. Therefore if $\psi = \Ind_{\tilde{H} }^{\tilde{G}} (\chi )$ then $\psi(\alpha)$ is a diagonal matrix with $l$-th roots of unity on the diagonal. Moreover, it is the matrix with the first element is $\zeta$ on the diagonal and each other diagonal element equals to one. Indeed, by definition of induced representation on the $i$-th position we have $\chi( \gamma_i^{-1} \alpha \gamma_i  )$ and it is easy to see that $\gamma_i^{-1} \alpha \gamma_i$ has first 1 on the first position, provided $i \ne 1$.

\begin{bf} Step 3.\end{bf}We claim that $\psi'(\alpha_i)$ is also a diagonal matrix, where $\psi'~=~\Ind_{\tilde{H}'}^{\tilde{G}} (\chi')$. We know that this  is a matrix with exactly one non-zero element in each row and column. Suppose it is not a diagonal, therefore it changes at least two elements and hence trace of this matrix is $\sum_{k=1}^{n-2} \zeta_i $, where $\zeta_i$ are roots of unity. Since $\psi = \psi'$ we have $n-1 + \zeta = \sum_{k=1}^{n-2} \zeta_i $, which can't be true since the absolute value of the left hand side is strictly bigger than $n-2$. Here we use the fact that $l>2$ and therefore $\zeta \ne \pm 1$.

\begin{bf} Step 4.\end{bf} Let $A$ be an isomorphism of representations $\psi$ and $\psi'$. We will show that it is \emph{an isomorphism of monomial representations} $(\psi, \oplus \mathbb C_i )~=~(\psi' , \oplus \mathbb C_j)$. 
Indeed, it suffices to show that in the given basis $A$ is written as permutation matrix. Suppose it is not and therefore we have at least two non-zero elements in one columm. Also it has another non-zero element in some of those two rows, otherwise $\det(A)$ must be zero which is not since $A$ is an isomorphism. We have $ A \psi(\alpha) = \psi'(\alpha) A $ which is easy to calculate since $\psi(\alpha)$ and $\psi'(\alpha)$ are diagonal. By comparing elements from left and right hand sides one has $\zeta = 1$ which leads to the contradiction.
\end{proof}

\section{The Final Step}
In this section we will prove theorem [\ref{Mn}]. We will denote by $F$ rational function field with the base field $\mathbb F_q$: $F = \mathbb F_q(t)$, where $q=p^m$, $p$ is prime. It is enough to show that for any separable geometric extension $K$ of $F$ of degree $n$, with extension $N$ of $K$, $N$ normal over $F$ and Galois Groups $G=\Gal(N/ F)$ and $H=\Gal(N /K)$ there exist an odd prime $l$ and Galois extension $M$ over $F$ with $\Gal(M / F ) \simeq C_l^{n}\rtimes G$ and $\Gal(M / K) =C_l^{n}\rtimes H$, where $G$ acts on components of $C_l^{n}$ by permuting them as cosets $G/H$.  We will prove this statement in a few steps. 

The Chebotarev density theorem for function fields(see \cite{Rosen} theorem[9.13B]) insures us that for any sufficiently large number $T$ we could find a prime $\mathfrak{p}$ of $F$ which has degree $T$ and splits completely in $N$. Note that if prime splits completely in $N$ then it also splits completely in $K$. Now, we pick an odd prime number $l$  co-prime to the characteristic $p$, to $q-1$, to the order of $G$ and to the class number $h_K$ of $K$. Then we pick a large enough number $T$ divisible by $(l-1)$.
Finally we pick a prime $\mathfrak{p}$ of $F$ of degree $T$ which splits completely in $N$. Let $\mathfrak{b_1}, \dots, \mathfrak{b_n} $ denote primes of $K$ lying above it. We have:
 
\begin{lem}
In the above settings there exists cyclic ramified extension $L_l$ of $K$ of degree $l$ ramifing only at $\mathfrak{b_1}$.
\end{lem}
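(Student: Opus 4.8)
The plan is to construct $L_l$ as a ray class field extension of $K$, using the place $\mathfrak{b}_1$ as the modulus. Recall that for a global function field $K$ with field of constants $\mathbb{F}_q$, and a place $\mathfrak{b}$ of degree $d$, the ray class group modulo $\mathfrak{b}$ sits in an exact sequence
\begin{equation}
1 \to \frac{(\mathcal{O}_{\mathfrak{b}}/\mathfrak{b})^{\times}}{\mathrm{image\ of\ }\mathcal{O}_K^{\times}} \to \Cl_{\mathfrak{b}}(K) \to \Cl(K) \to 1,
\end{equation}
where $\Cl(K)$ is the (divisor, i.e. degree-zero plus $\mathbb{Z}$) class group and $(\mathcal{O}_{\mathfrak{b}}/\mathfrak{b})^{\times} \simeq \mathbb{F}_{q^d}^{\times}$ has order $q^d - 1$. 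By class field theory there is an abelian extension of $K$, unramified outside $\mathfrak{b}_1$ and tamely ramified there, whose Galois group is $\Cl_{\mathfrak{b}_1}(K)$; any quotient of order $l$ gives the desired $L_l$, and it will be ramified at $\mathfrak{b}_1$ precisely because we take a quotient on which the inertia part — the image of $\mathbb{F}_{q^d}^{\times}$ — surjects.

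First I would record the numerology. The place $\mathfrak{b}_1$ lies over $\mathfrak{p}$, which splits completely in $N \supseteq K$, so $\deg \mathfrak{b}_1 = \deg \mathfrak{p} = T$, and $T$ was chosen divisible by $l-1$. Since $l \mid q^T - 1$: indeed $l \nmid q$ and $l \nmid q-1$ are given, while $l \mid q^{l-1}-1$ by Fermat and $(l-1) \mid T$ forces $q^{l-1}-1 \mid q^T - 1$, so $l \mid q^T-1$. Hence $\mathbb{F}_{q^T}^{\times}$ has a quotient of order $l$. Next I would push this quotient through $\Cl_{\mathfrak{b}_1}(K)$: because $l$ was chosen coprime to $h_K = |\Cl^0(K)|$ and coprime to $q-1$ (the order of the constant-field part sitting in $\Cl(K)$), the prime $l$ divides the order of $\Cl_{\mathfrak{b}_1}(K)$ only through the inertia subgroup $(\mathcal{O}_{\mathfrak{b}_1}/\mathfrak{b}_1)^{\times}/\overline{\mathcal{O}_K^{\times}}$. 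More carefully: the image of $\mathcal{O}_K^{\times}$ in $\mathbb{F}_{q^T}^{\times}$ is generated by the image of $\mathbb{F}_q^{\times}$ (a geometric extension has $\mathcal{O}_K^{\times} = \mathbb{F}_q^{\times}$ up to the subtlety that $\mathfrak{b}_1$ splits, so only constants are units at all finite places — here one uses that $\mathfrak{p}$ splits completely), which has order dividing $q-1$, coprime to $l$. So the $l$-part of $\Cl_{\mathfrak{b}_1}(K)$ equals the $l$-part of $\mathbb{F}_{q^T}^{\times}$, which is nontrivial. Taking the corresponding degree-$l$ subextension of the ray class field gives a cyclic $L_l/K$ of degree $l$ in which $\mathfrak{b}_1$ ramifies (it must, since the ramification at $\mathfrak{b}_1$ is exactly measured by the image of the inertia group, which we arranged to survive) and no other place does (the ray class field modulo $\mathfrak{b}_1$ is unramified away from $\mathfrak{b}_1$ by construction).

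The main obstacle is the careful bookkeeping of the unit image in the exact sequence: one must make sure that the image of $\mathcal{O}_K^{\times}$ (equivalently $S$-units for $S = \{\mathfrak{b}_1\}$, or in the divisor formulation just the constants) inside $(\mathcal{O}_{\mathfrak{b}_1}/\mathfrak{b}_1)^{\times} \simeq \mathbb{F}_{q^T}^{\times}$ does not accidentally contain the full $l$-torsion, which would kill the inertia. This is why $l \nmid q-1$ is imposed — it guarantees the (cyclic) image of constants has order prime to $l$, so cannot absorb the $l$-part. The coprimality of $l$ with $|G| = [N:F]$ is not needed for this lemma itself; it will be used in the subsequent steps (to control the Galois group of the compositum over $F$ and realize the wreath-type group $C_l^n \rtimes G$), as will the fact that the extension we build is ramified at exactly one of the $n$ conjugate primes $\mathfrak{b}_1, \dots, \mathfrak{b}_n$, so that translating by $G$ produces $n$ independent degree-$l$ characters permuted by $G$ as the cosets $G/H$. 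I would also remark that one could alternatively build $L_l$ concretely via an Artin–Schreier–Witt or Kummer-type equation, but the ray class field argument is cleanest and makes the ``ramified only at $\mathfrak{b}_1$'' assertion transparent.
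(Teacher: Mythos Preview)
Your proposal is correct and follows essentially the same approach as the paper: both arguments construct $L_l$ inside the ray class field of $K$ with modulus $\mathfrak{b}_1$, use the exact sequence relating $\Cl_{\mathfrak{b}_1}(K)$ to $(\mathcal{O}_K/\mathfrak{b}_1)^{\times}$ and $\Cl(K)$, and invoke $(l-1)\mid T$ together with $l\nmid(q-1)$ and $l\nmid h_K$ to exhibit an $l$-quotient coming from the inertia part. Your write-up is in fact somewhat more careful than the paper's about why the resulting extension is genuinely ramified at $\mathfrak{b}_1$ and about the role of the unit image, and your remark that $l\nmid |G|$ is only needed later (for the Schur--Zassenhaus splitting) is accurate.
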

\begin{proof}
Consider the modulus $\mathfrak{m} \defeq  \mathfrak{b_1}$ and associated ray class group $\Cl_{\mathfrak{m}}(K)$. We will show this group has a subgroup of order $l$. Let $\mathcal{O}_K$ denotes the ring of integers of $K$ with respect to the field extension $K / F$.
Class field theory shows that we have the following exact sequence of abelian groups: 
$$0 \to \mathbb F_q^{*} \to  \left( \mathcal{O}_K/\mathfrak{m} \right )^{*}  \to \Cl_{m} (K) \to \Cl(K)\to 0,$$
We claim that $\Cl_{m}(K)$ contains a subgroup of order $l$ and since $l$ is prime to the order of $\Cl(K)$ the fixed field corresponding to this subgroup is ramified at $\mathfrak{b_1}$.

Indeed the order of $\left( \mathcal{O}_K/\mathfrak{m} \right )^{*}$ is $N(\mathfrak{b}_1) -1 = q^{T} -1 $ , where $N(\mathfrak{a})$ denotes the norm of an ideal $\mathfrak{a}$. Since $T$ is divisible by $(l-1)$ this quantity is divisible by $l$. It follows that the order of $\Cl_{m}(K)$ is divisible by $l$ and therefore we have a cyclic extension of $K$ of degree $l$ which ramifies only at $\mathfrak{b}_1$.
\end{proof}

The next step is to take the common normal closure $M$ of $N$ and $L_l$.
\begin{lem}
The Galois group $\Gal(M/F)$ of the common normal closure $M$ of $N$ and $L_l$ over $F$ is $C_l^{n}\rtimes G$.
\end{lem}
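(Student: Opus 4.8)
The plan is to build $M$ as the compositum of $N$ with all the $\Gal(M/F)$-conjugates of $L_l$, to check that over $N$ these conjugates are mutually independent (each contributing one cyclic factor of order $l$), to identify the permutation of them induced by $G$ with the action on the cosets $G/H$, and finally to deduce the splitting of the resulting group extension from the choice $l\nmid|G|$. The first thing I would record is a uniqueness statement: since $l$ is coprime to $q-1$ and to $h_K$, the exact sequence from the previous lemma shows that the $l$-primary part of $\Cl_{\mathfrak b_1}(K)$ equals that of $(\mathcal O_K/\mathfrak b_1)^{*}$, which is cyclic; hence $L_l$ is the \emph{unique} cyclic extension of $K$ of degree $l$ that is unramified outside $\mathfrak b_1$. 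Fixing representatives $\sigma_1=e,\sigma_2,\dots,\sigma_n$ of $G/H$ and lifts $\bar\sigma_i\in\Gal(M/F)$, set $K_i:=\bar\sigma_i(K)\subseteq N$ and $L_l^{(i)}:=\bar\sigma_i(L_l)\subseteq M$. Then $L_l^{(i)}$ is the unique cyclic degree-$l$ extension of $K_i$ unramified outside $\mathfrak b_1^{(i)}:=\bar\sigma_i(\mathfrak b_1)$, so by uniqueness it depends only on the coset $\sigma_iH$; consequently the conjugates of $L_l$ are exactly $L_l^{(1)},\dots,L_l^{(n)}$ and $M=N\cdot L_l^{(1)}\cdots L_l^{(n)}$.

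Next I would compute $\Gal(M/N)$. Because $\mathfrak p$ splits completely in $N$, the extension $N/F$ is unramified above $\mathfrak p$; in particular each $L_l^{(i)}\not\subseteq N$, so $NL_l^{(i)}/N$ is cyclic of degree $l$ and is totally ramified exactly at the primes of $N$ lying over $\mathfrak b_1^{(i)}$. A bookkeeping with the simply transitive action of $G$ on the $|G|$ primes of $N$ above $\mathfrak p$ shows that the primes over $\mathfrak b_1^{(i)}$ correspond to the coset $\sigma_iH$, so for $i\ne j$ the ramification loci of $NL_l^{(i)}/N$ and $NL_l^{(j)}/N$ are disjoint. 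Since a cyclic extension of prime degree that ramifies at a prime cannot be contained in an extension unramified there, $NL_l^{(i)}\cap\prod_{j\ne i}NL_l^{(j)}=N$ for every $i$; by induction $[M:N]=\prod_{i=1}^n[NL_l^{(i)}:N]=l^n$, and therefore the natural injection $\Gal(M/N)\hookrightarrow\prod_{i}\Gal(NL_l^{(i)}/N)=C_l^n$ is an isomorphism, the $i$-th coordinate subgroup being $\Gal(M/\prod_{j\ne i}NL_l^{(j)})$.

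It then remains to identify the conjugation action of $G=\Gal(M/F)/\Gal(M/N)$ on $\Gal(M/N)\cong C_l^n$ and to split the extension. For $g\in G$ with lift $\bar g$, uniqueness again gives $\bar g(L_l^{(i)})=L_l^{(j)}$ with $\sigma_jH=g\sigma_iH$, so conjugation by $\bar g$ sends the $i$-th coordinate subgroup to the $j$-th one: $G$ permutes the $n$ coordinates exactly as it permutes $G/H$. To see there is no extra twist on a coordinate fixed by $g$, I would use that a group acts trivially by conjugation on its abelian quotients, applied to $\Gal(M/K_i)\twoheadrightarrow\Gal(L_l^{(i)}/K_i)\cong\Gal(NL_l^{(i)}/N)$. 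Hence the action is precisely the permutation action on $G/H$. Finally, as $l$ is coprime to $|G|$ we have $\gcd(l^n,|G|)=1$, so Schur--Zassenhaus yields a complement to $\Gal(M/N)$ in $\Gal(M/F)$; a split extension realising the permutation action is $C_l^n\rtimes G$ with $G$ acting on $G/H$, which is the claim.

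The step I expect to be the real obstacle is the computation in the second paragraph, namely that $\Gal(M/N)$ is the \emph{whole} product $C_l^n$ and not a proper $G$-submodule. This is exactly the place where the careful choice of $\mathfrak p$ pays off: splitting completely in $N$ removes all ramification coming from $N/F$ and forces the $n$ primes of $K$ above $\mathfrak p$ to sit over disjoint families of primes of $N$, and it is this disjointness of ramification loci that decouples the $n$ cyclic pieces. By contrast, the uniqueness of $L_l$ (using $l\nmid q-1$ and $l\nmid h_K$) and the splitting via Schur--Zassenhaus (using $l\nmid|G|$) are comparatively formal, and the identification of the $G$-action is a routine conjugation computation.
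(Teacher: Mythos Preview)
Your argument is essentially the paper's own: transport $L_l$ along coset representatives of $G/H$, use disjointness of the ramification loci above the completely-split prime $\mathfrak p$ to see that the $n$ resulting $C_l$-extensions of $N$ are linearly disjoint, obtain the exact sequence $1\to C_l^n\to\Gal(M/F)\to G\to 1$ with $G$ permuting the factors as $G/H$, and split it by Schur--Zassenhaus using $l\nmid|G|$; you in fact supply more detail than the paper on the well-definedness of the conjugates and on the absence of a twist on a coordinate fixed by $g$.

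One wording caveat: your uniqueness claim for $L_l$ as ``the unique cyclic degree-$l$ extension of $K$ unramified outside $\mathfrak b_1$'' is not literally correct in the function-field setting, since the everywhere-unramified constant extension $K\mathbb F_{q^l}/K$ also satisfies that description. What the exact sequence actually delivers is that the $l$-part of $\Cl_{\mathfrak b_1}(K)$ is cyclic, hence there is a unique cyclic degree-$l$ subextension \emph{of the ray class field} of conductor $\mathfrak b_1$. This is all you need: $L_l$ is constructed inside that ray class field, and any automorphism of $M$ fixing $K$ pointwise (in particular fixing $\mathfrak b_1$) carries that ray class field to itself, so it carries $L_l$ to the unique such subfield, namely $L_l$. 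With this adjustment your argument goes through unchanged.
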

\begin{proof}
By construction $N$ is normal over $F$ and $K=N^{H}$. Consider the set $\Hom(K,N)$ of all embeddings of $K$ into $N$. This has an action of $G$ on it isomorphic to the action of $G$ on $G/H$.  For each element $\sigma_i \in \Hom(K,N)$ consider the field $K^{\sigma_i}$ and corresponding cyclic extension $L^{\sigma_i} = L \otimes_{K^{\sigma_i}} N$. We claim that the composites $NL^{\sigma_i}$ are linearly disjoint over $N$ when $\sigma_i$ runs over the set $\Hom(K, N)$. Indeed, consider the set of primes of $N$ which lie over $\mathfrak{p}$ and ramify in the composite $NL^{\sigma_i}$ over $N$. Since $H^{\sigma_i}$ fixes $K^{\sigma_i}$ this set is invariant under the action of $H^{\sigma_i}$ and not invariant under the action of $g$ for each $g \in G$, $g \not \in H^{\sigma_i}$. Hence all $NL^{\sigma_i}$ ramifies in different primes of $N$ lying above $\mathfrak{p}$. Therefore we have $n$ disjoint $C_l$-extensions $NL^{\sigma_i} / N$ in $M$ and $G$ permutes them as cosets $G/H$. It follows that we have the following exact sequence: 
$$ 1 \to C_{l}^{n} \to \Gal(M/F) \to G \to 1$$
Since the order of $G$ is co-prime to $l$, by the Schur--Zassenhaus theorem(see \cite{Group}) we have a section from $\gamma: G \to \Gal(M/F)$ which means that this sequence split and $\Gal(M/F) \simeq C_l^{n} \rtimes G$ as desired. 

\end{proof}

\newpage

\bibliography{mybib}{}
\bibliographystyle{plain}

\newpage

\tableofcontents

\end{document}